\documentclass[12pt]{article}

\usepackage[english]{babel}
\usepackage{graphicx}
\usepackage{latexsym}
\usepackage{amsfonts}
\usepackage{dsfont}
\usepackage{amscd}
\usepackage{amssymb}
\usepackage{amsmath}
\usepackage{mathtools}
\usepackage{mathrsfs}
\usepackage{amsthm}
\usepackage{tikz}

\input xy
\xyoption{all}

\newcommand{\p}{\varphi}

\newcommand{\bz}{\mathbb{Z}}
\newcommand{\F}{\mathbb{F}}

\newcommand{\s}{\mathcal{S}}
\newcommand{\Et}{\widehat E_{\text{tight}}}
\newcommand{\Eu}{\widehat E_{\infty}}
\newcommand{\Gt}{\mathcal{G}_{\text{tight}}}
\newcommand{\G}{\mathcal{G}}

\setlength{\textwidth}{16.3cm}
\setlength{\textheight}{23cm}
\setlength{\oddsidemargin}{0cm}
\setlength{\topmargin}{0cm}
\setlength{\footskip}{1cm}
\setlength{\headsep}{0cm}
\setlength{\headheight}{0cm}

\theoremstyle{definition}
\newtheorem{theo}{Theorem}[section]

\newtheorem{ex}[theo]{Example}
\newtheorem{lem}[theo]{Lemma}
\newtheorem{defn}[theo]{Definition}

\begin{document}

\title{Self-Similar Graph C*-Algebras and Partial Crossed Products}
\author{Ruy Exel\thanks{Partially supported by CNPq (Brazil).} \hspace{1cm} Charles Starling\thanks{Supported by CNPq (Brazil).}}

\date{}
\maketitle
\begin{abstract}
In a recent paper, Pardo and the first named author introduced a class of C*-algebras which which are constructed from an action of a group on a graph. This class was shown to  include many C*-algebras of interest, including all Kirchberg algebras in the UCT class. In this paper, we study the conditions under which these algebras can be realized as partial crossed products of commutative C*-algebras by groups. In addition, for any $n\geq 2$ we present a large class of groups such that for any group $H$ in this class, the Cuntz algebra $\mathcal{O}_n$ is isomorphic to a partial crossed product of a commutative C*-algebra by $H$.
\end{abstract}

\section{Introduction}

In this paper, we study C*-algebras arising from {\em self-similar graph actions}. A self-similar graph action consists of a group $G$, a directed graph $E$, a ``cocycle'' $\p: G \times E^1 \to G$, and a length-preserving action of $G$ on $E^*$, the set of finite paths through $E$. Furthermore, the action of $G$ on $E^*$ should be ``self-similar'', in the sense that
\[
g(e\alpha) = (ge)(\p(g, e)\alpha), \hspace{1cm} g\in G, e\in E^1, \alpha\in E^*.
\]
The C*-algebra associated to $(G, E, \p)$, denoted $\mathcal{O}_{G, E}$ is then the universal C*-algebra generated by a Cuntz-Krieger family for $E$ and a unitary representation of $G$, subject to relations given by the action. The main question this work addresses is the following: when can $\mathcal{O}_{G, E}$ be written as a partial crossed product of a commutative C*-algebra by a group?

Some of the most powerful tools to analyze the structure of a given C*-algebra become available when one is able to describe it as a crossed product.  Questions about representation theory, structure of ideals, simplicity, nuclearity, K-theory, KMS states, and many others may be answered, under suitable hypothesis, when the algebra under analysis is given the structure of a crossed product.  Unfortunately, not many
algebras of interest may be described as a crossed product relative to a global group action, but if we resort to partial actions, the chances of obtaining such a description improve manifold, at the same time that most of the above mentioned tools are still available for partial crossed products.

The motivation for defining the algebras $\mathcal{O}_{G, E}$ in \cite{EP13} was to generalize two interesting classes of C*-algebras. The first class is the C*-algebras that Nekrashevych associated to self-similar groups in \cite{Nek04} and \cite{Nek09}. In \cite{EP13}, it was shown that these algebras arise from self-similar graph actions $(G, E, \p)$ for which $E$ is a finite graph with only one vertex. In the work of Nekrashevych and in other work on self-similar groups, the map $\p$ takes the form of a {\em restriction}, $(g, x)\mapsto \left.g\right|_x$.

The second class generalized in \cite{EP13} are the C*-algebras constructed by Katsura \cite{Ka08} from pairs of integer matrices $A$ and $B$, denoted $\mathcal{O}_{A, B}$. The pair $(A, B)$ gives rise to a self-similar graph action $(\bz, E_A, \p)$, where $E_A$ is the graph whose incidence matrix is $A$ and the action of $\bz$ and the cocycle $\p$ are determined by the entries of $A$ and $B$. It is shown in \cite{EP13}, Example 3.4, that $\mathcal{O}_{A, B} \cong \mathcal{O}_{\bz, E_A}$. From \cite{Ka08}, it is a fact that every Kirchberg algebra in the UCT class arises as $\mathcal{O}_{A, B}$ for some $A$ and $B$, so the algebras we consider here constitute a large class. 

In \cite{EP13}, $\mathcal{O}_{G, E}$ is realized as the C*-algebra of the tight groupoid of a certain inverse semigroup $\s_{G, E}$, using the theory of such algebras from \cite{Ex08}. This groupoid is shown to be Hausdorff in the case where $(G, E, \p)$ is {\em pseudo-free}. In Theorem \ref{RFstronglyE*unitary}, we show that $(G, E, \p)$ is pseudo-free if and only if inverse semigroup $\s_{G, E}$ is {\em strongly E*-unitary}. We then use the results of \cite{SM11} to prove that when $(G, E, \p)$ is pseudo-free, $\mathcal{O}_{G, E}$ is isomorphic to a partial crossed product of a commutative C*-algebra by the universal group $U(\s_{G, E})$ of $\s_{G, E}$.

In our final section, we use our results to realize the Cuntz algebra $\mathcal{O}_n$ as partial crossed products $C(Y)\rtimes H$ for $Y$ homeomorphic to the Cantor set and $H$ the universal group of {\em any} $\s_{G, E}$ satisfying certain conditions. 

\section{Terminology, Notation, and Background}

Let $X$ be a finite set. We will denote by $X^*$ the set of all {\em words} $x_1x_2\cdots x_n$ where $n\geq 1$ and $x_1, x_2, \dots, x_n\in X$, together with the symbol $\varnothing$, which is called the {\em empty word}. For $\alpha = \alpha_1\alpha_2, \cdots\alpha_n\in X^*$, we let $|\alpha| = n$ and call this the {\em length} of $\alpha$ -- we take $|\varnothing| = 0$. We may concatenate two words in $X^*$: if $\alpha, \beta\in X^*$ with $\alpha = \alpha_1\alpha_2\cdots\alpha_n$ and $\beta =\beta_1\beta_2\cdots\beta_m$ then their concatenation is given by
\[
\alpha\beta = \alpha_1\alpha_2\cdots\alpha_n\beta_1\beta_2\cdots\beta_m.
\]
We also take, for any $\alpha\in X^*$,
\[
\alpha \varnothing = \varnothing\alpha = \alpha.
\]
This operation gives $X^*$ the structure of a semigroup with identity (or {\em monoid}) and is called the {\em free monoid} on $X$. We may also consider the space $\Sigma_X$ of {\em infinite} words in $X$. For an element $x\in \Sigma_X$ and $\alpha\in X^*$, the concatenation $\alpha x$ is again in $\Sigma_X$. For $\alpha\in X^*$, let
\[
\alpha \Sigma_X = \{\alpha x\in \Sigma_X \mid x\in \Sigma_X\}.
\]
Sets of this type are called {\em cylinder sets}, and they generate the product topology on $\Sigma_X$. In this topology, cylinder sets are both open and closed and $\Sigma_X$ is homeomorphic to the Cantor set.

A {\em directed graph} is a quadruple $E = (E^0, E^1, r, d)$ where $E^0$ and $E^1$ are sets and $r, d$ are functions from $E^1$ to $E^0$. The set $E^0$ is called the set of {\em vertices} of $E$ and $E^1$ is called the set of {\em edges} of $E$. A vertex $x\in E^0$ is said to be a {\em source} if $r^{-1}(x) = \emptyset$, and it is said to be a {\em sink} if $d^{-1}(x) =\emptyset$. A directed graph $E$ is called {\em finite} if $E^0$ and $E^1$ are finite sets. For $n\geq 2$ we let
\[
E^n = \{ x_1x_2\cdots x_n\mid x_i \in E^1, d(x_i) = r(x_{i+1}) \text{ for } 1\leq i \leq n-1\}
\]
and take
\[
E^* = \bigcup_{n = 0}^\infty E^n.
\]
The set $E^*$ is called the set of {\em finite paths} in $E$. We may concatenate two paths $\alpha$ and $\beta$ and obtain $\alpha\beta$ if $r(\beta) = d(\alpha)$, taking the convention that $d(x) = r(x) = x$ for all vertices $x$, and that $r(\alpha)\alpha = \alpha = \alpha d(\alpha)$. We will also consider the set of {\em infinite paths}
\[
\Sigma_E = \{e_1e_2\cdots \mid e_i\in E^1, s(e_i) = r(e_{i+1})\text{ for all }i \geq 1\}
\]
given the product topology. With an abuse of notation, for $\alpha \in E^*$ we will let
\[
\alpha\Sigma_E = \{\alpha x\mid x\in E^*, r(x) = d(\alpha)\}
\]
and call these cylinder sets as well -- sets of this type generate the topology on $\Sigma_E$. If $E$ has one vertex, we may refer to paths in $E$ as ``words''

A semigroup $\s$ is called an {\em inverse semigroup} if for each $s\in\s$ there exists a unique element $s^*\in\s$ such that $s^*ss^* = s^*$ and $ss^*s = s$. An element $e\in \s$ is called an {\em idempotent} if $e^2 = e$ -- the set of all such elements will be denoted $E(\s)$. It is true that, for all $s, t\in\s$ and $e, f\in E(\s)$ we have that $(st)^* = t^*s^*$, $(s^*)^* = s$, $e^* = e$, $ef = fe$ and $ef \in E(\s)$. For all $s\in \s$, the elements $ss^*$ and $s^*s$ are idempotents.

Recall that a {\em groupoid} is a set $\G$ together with a subset $\G^{(2)} \subset \G \times \G$, called the set of composable pairs, a product map $\G^{(2)} \to \G$ with $(\gamma, \eta)\mapsto \gamma\eta$, and an inverse map from $\G$ to $\G$ with $\gamma \mapsto \gamma^{-1}$ such that
\begin{enumerate}
\item $(\gamma^{-1})^{-1} = \gamma$ for all $\gamma\in \G$,
\item If $(\gamma, \eta), (\eta, \nu)\in \G^{(2)}$, then $(\gamma\eta,\nu), (\gamma, \eta\nu)\in \G^{(2)}$ and $(\gamma\eta)\nu = \gamma(\eta\nu)$,
\item $(\gamma, \gamma^{-1}), (\gamma^{-1},\gamma)\in \G^{(2)}$, and $\gamma^{-1}\gamma\eta = \eta$, $\xi\gamma\gamma^{-1}$ for all $\eta, \xi$ with $(\gamma, \eta), (\eta,\xi) \in \G^{(2)}$.
\end{enumerate}
Elements of the form $\gamma\gamma^{-1}$ are called {\em units}, and the set of all such elements is denoted $\G^{(0)}$ and is called the {\em unit space} of $\G$. The maps $r: \G\to \G^{(0)}$ and $d:\G\to \G^{(0)}$ defined by
\[
r(\gamma) = \gamma\gamma^{-1}, \hspace{1cm} d(\gamma) = \gamma^{-1}\gamma
\]
are called the {\em range} and {\em source} maps respectively. We note that $(\gamma, \eta)\in \G^{(2)}$ if and only if
$r(\eta) = d(\gamma)$. A {\em topological groupoid} is a groupoid which is a topological space for which the inverse and
product maps are continuous (where $\G^{(2)}$ is given the product topology inherited from $\G\times\G$). An {\em
isomorphism} of topological groupoids is a homeomorphism which preserves the groupoid operations.  A topological
groupoid $\G$ is called {\em \'etale} if it is locally compact, second countable, and the maps $r$ and $d$ are local homeomorphisms. We note that these imply that $\G^{(0)}$ is open in $\G$ and that for all $x\in \G^{(0)}$ the spaces
\[
\G^x := r^{-1}(x), \hspace{1cm} \G_x := d^{-1}(x)
\]
are discrete.

An open set $S\subset \G$ of a topological groupoid is called a {\em slice} if the restrictions of $r$ and $d$ to $S$ are both injective. In an \'etale groupoid $\G$, the collection of slices forms a basis for the topology of $\G$, cf. \cite{Ex08}, Proposition 3.5.

\'Etale groupoids can be constructed from actions on topological spaces. Recall that an {\em action} of an inverse semigroup $\s$ on a space $X$ is a pair
\[
(\{D_e\}_{e\in E(\s)}, \{\theta_s\}_{s\in \s})
\]
such that each $D_e \subset X$ is an open set, the union of the $D_e$ coincides with $X$, and the maps
\[
\theta_s: D_{s^*s}\to D_{ss^*}
\]
are continuous bijections which satisfy $\theta_s\circ\theta_t = \theta_{st}$, where the composition is on the largest domain possible. Given an action $\theta$ of an inverse semigroup $\s$ on a space $X$, one may form an \'etale groupoid $\G(\s, X, \theta)$, called the {\em groupoid of germs}. As a set,
\[
\G(\s, X, \theta) = \{ [s,x]\mid x\in D_{s^*s}\}
\]
where $[s,x]$ are equivalence classes of elements of $\s\times X$ such that $[s,x] = [t,y]$ iff $x=y$ and there exists
some idempotent $e\in E(\s)$ such that $x\in D_e$ and $se = te$. One may always assume that $s^*se = e$ and $t^*te = e$, and we note that this implies that $\theta_s$ and $\theta_t$ agree on the neighborhood $D_e$ of $x$. The unit space of $\G(\s, X, \theta)$ is identified with $X$, and
\[
[s,x]^{-1} = [s^*, \theta_s(x)], \hspace{0.5cm} r([s,x]) = \theta_s(x), \hspace{0.5cm} d([s,x]) = x, \hspace{0.5cm} [t, \theta_s(x)][s,x] = [ts, x].
\]

A related construction arises from a group $G$ acting on a topological space by partial homeomorphisms. Let $X$ be a set and let $G$ be a group. Recall that a {\em partial action} of $G$ on $X$ is a pair
\[
(\{D_g\}_{g\in G}, \{\theta_g\}_{g\in G})
\]
consisting of a collection $\{D_g\}_{g\in G}$ of subsets of $X$, and a collection $\{\theta_g\}_{g\in G}$ of functions
\[
\theta_g: D_{g^{-1}}\to D_g,
\]
such that $D_1 = X$, $\theta_1$ is the identity map, and $\theta_g\circ\theta_h \subset \theta_{gh}$. Here,
$\theta_g\circ\theta_h$ may be ambiguous, because the range of $\theta_h$ may not be contained in the domain
$\theta_g$. This function is then defined on the largest domain possible, that is, $\theta^{-1}_{h}(D_h\cap D_{g^{-1}})$. On this domain it is defined to be $\theta_g\circ \theta_h$. The notation $\theta_g\circ\theta_h \subset \theta_{gh}$ means that the function $\theta_{gh}$ extends the function $\theta_g\circ\theta_h$. With the above, one can show that each $\theta_g$ is a bijection on its domain and that $\theta^{-1}_g = \theta_{g^{-1}}$.

A partial topological dynamical system is a quadruple
\[
(X, G, \{D_g\}_{g\in G}, \{\theta_g\}_{g\in G})
\]
such that $X$ is a topological space, $G$ is a group, $(\{D_g\}_{g\in G}, \{\theta_g\}_{g\in G})$ is a partial action of $G$ on the set $X$ such that each $D_g$ is an open subset of $X$ and each $\theta_g$ is a homeomorphism.

Given a partial topological dynamical system $\theta = (X, G, \{D_g\}_{g\in G}, \{\theta_g\}_{g\in G})$ with $X$ a locally compact Hausdorff space and $G$ discrete, using a construction of Abadie \cite{Ab04} we may form the \'etale groupoid
\begin{equation}\label{abadiegroupoid}
G \ltimes_\theta X := \{ (g, x) \in G\times X\mid x\in X_{g^{-1}}\}
\end{equation}
with $(G \ltimes_\theta X)^{(2)} = \{ ((g, x),(h, y)) \mid \theta_h(y) = x\}$, $r(g, x) = \theta_g(x), d(g, x) = x$ and $(g, x)^{-1} = (g^{-1}, \theta_{g}(x))$. If $y = \theta_g(x)$, then we have $(g, x)(h,y) = (gh, y)$.

There is a procedure for producing a C*-algebra from an \'etale groupoid $\G$, given by Renault in \cite{R80}. One considers the linear space $C_c(\G)$ of compactly supported complex functions on $\G$ equipped with product and involution given by
\[
(fg)(\gamma) = \sum_{r(\eta) = s(\gamma)} f(\gamma\eta)g(\eta^{-1})
\]
\[
f^*(\gamma) = \overline{f(\gamma^{-1})}.
\]
These give $C_c(\G)$ the structure of a complex $*$-algebra. The C*-algebra of $\G$ is then defined to be the completion
of this $*$-algebra in a certain norm, and is denoted $C^*(\G)$. It is a fact that if $\G$ and $\mathcal{H}$ are
isomorphic \'etale groupoids, then their C*-algebras are isomorphic. 

A partial C*-dynamical system is a quadruple
\[
(A, G, \{D_g\}_{g\in G}, \{\theta_g\}_{g\in G})
\]
such that $A$ is a C*-algebra, $G$ is a group, $(\{D_g\}_{g\in G}, \{\theta_g\}_{g\in G})$ is a partial action of $G$ on the set $A$ such that each $D_g$ is a closed two-sided ideal of $A$ and each $\theta_g$ is a $*$-isomorphism. If we are given a partial C*-dynamical system as above such that $G$ is a discrete group, then we can form the partial crossed product by first considering the $*$-algebra of formal finite linear combinations
\[
\sum_{g\in G}a_g\delta_g
\]
such that $a_g\in D_g$ for each $g\in G$. Here the symbols $\delta_g$ have no meaning other than placeholders.\footnote{One may consider instead the linear space of finitely supported functions $f$ from $G$ into $A$ such that $f(g)\in D_g$, and in this case, the element $a\delta_g$ is identified with the function which takes the value $a$ on $g$ and $0$ elsewhere.} Addition and scalar multiplication are defined in the obvious way, and multiplication is determined by the rule
\[
(a\delta_g)(b\delta_h) = \theta_g(\theta_{g^{-1}}(a)b)\delta_{gh}
\]
while the $*$ is determined by
\[
(a\delta_g)^* = \theta_{g^{-1}}(a^*)\delta_{g^{-1}}
\]
This multiplication is not always associative, but for the cases we consider here it is.

A partial topological dynamical system $(X, G, \{D_g\}_{g\in G}, \{\theta_g\}_{g\in G})$ with $X$ locally compact and Hausdorff gives rise to a partial C*-dynamical system $(C_0(X), G, \{C_0(D_g)\}_{g\in G}, \{\widetilde\theta_g\}_{g\in G})$ with
\[
\widetilde\theta_g: C_0(D_{g^{-1}}) \to C_0(D_{g})
\]
\[
\left.\widetilde\theta_g(f)\right|_x = f(\theta_g^{-1}(x))
\]
and $C_0(D_g)$ is understood to denote all the functions $f\in C_0(X)$ which vanish outside of $D_g$. It is a fact that in this situation, the resulting partial action crossed product is isomorphic to the C*-algebra of the groupoid $G \ltimes_\theta X$. An important special case is the situation where $X$ and each $D_g$ is compact -- in this case $C_0(X)$ becomes $C(X)$ and each $C_0(D_g)$ becomes $C(D_g)$.

 \section{Inverse Semigroups from Graph Actions}
Let $E = (E^0, E^1, r, d)$ be a finite directed graph. An {\em automorphism} of $E$ is a bijective map
\[
h: E^0\cup E^1 \to E^0\cup E^1
\]
such that $h(E^i)\subset E^i$ for $i = 0,1$ and also such that $h\circ d = d\circ h$ and $h\circ r = r\circ h$. An {\em action} of a group $G$ on a graph $E$ is a group homomorphism from $G$ to the group of automorphisms of $E$.

Suppose that $G$ acts on a set $X$. A {\em one-cocycle} for the action of $G$ on $X$ is a map
\[
\p: G \times X \to G
\]
such that
\[
\p(gh, x) = \p(g, hx)\p(h,x)
\]
for all $g, h\in G$ and $x\in X$. Setting $g=h=1$ into the above we get that $\p(1, x) = 1$ for each $x$.

We now assume that $E$ is a finite directed graph with no sources or sinks, $G$ is a countable discrete group, and that we have a homomorphism from $G$ to the group of automorphisms of $E$. We denote the image of a group element $g$ under this homomorphism simply as $g$. We also assume that we have a one-cocycle $\p : G\times E^1 \to G$ for the restriction of our action to $E^1$, which also satisfies
\[
\p(g, e)x = gx, \hspace{1cm}\text{ for all } g\in G, \hspace{0.5cm} e\in E^1, \hspace{0.5cm} x\in E^0
\]
In \cite{EP13}, they show that the action of $G$ and the cocycle $\p$ extend to $E^*$ in a natural way. This induced action preserves lengths. Furthermore, for every $g, h\in G$, for every $x\in E^0$ and for every $\alpha$ and $\beta$ in $E^*$ such that $d(\alpha) = r(\beta)$ we have

\vspace{-0.3cm}\begin{tabular}{p{7cm} p{7cm}}
\begin{enumerate}\addtolength{\itemsep}{-0.5\baselineskip}
\item[(E1)] $(gh)\alpha = g(h\alpha)$
\item[(E2)] $\p(gh, \alpha) = \p(g, h\alpha)\p(h, \alpha)$
\item[(E3)] $\p(g, x) = g$
\item[(E4)] $r(g\alpha) = gr(\alpha)$
\end{enumerate}
&
\begin{enumerate}\addtolength{\itemsep}{-0.5\baselineskip}
\item[(E5)] $d(g\alpha) = gd(\alpha)$
\item[(E6)] $ \p(g, \alpha)x = gx$
\item[(E7)] $g(\alpha\beta) = (g\alpha)\p(g,\alpha)\beta$
\item[(E8)] $\p(g, \alpha\beta) = \p(\p(g, \alpha),\beta)$.
\end{enumerate}
\end{tabular}

\vspace{-0.3cm}\noindent The triple $(G, E, \p)$ is called a {\em self-similar graph action}.

Given a self-similar graph action $(G, E, \p)$, we construct an action of $G$ and a cocycle on the graph obtained from $E$ by collapsing all the vertices to a single vertex. To be more precise, consider the directed graph $\widetilde E :=(\{\varnothing\}, E^1, r, d)$, that is, the directed graph with one vertex whose edge set is equal to the edge set of $E$. Then the set of paths $\widetilde E^*$ is just the free monoid on $E^1$, with identity equal to the empty word $\varnothing = r(e) = d(e)$ for all $e\in E^1$.

It is clear that our given action of $G$ on $E$ induces an action of $G$ on $\widetilde E$, and that here we have $g\varnothing = \varnothing$ for all $g\in G$. Furthermore, the cocycle $\p$ is defined on $G \times E^1$, so it is also a cocycle for the induced action on $\widetilde E$. For clarity, we denote the induced cocycle on $G \times \widetilde E^*$ by $\widetilde\p$. By the above list, we have for every $w, v\in \widetilde E^*$ and $g, h \in G$:

\vspace{-0.3cm}\begin{tabular}{p{7cm} p{7cm}}
\begin{enumerate}\addtolength{\itemsep}{-0.5\baselineskip}
\item[(SS1)] $1w = w$
\item[(SS2)] $(gh)w = g(hw)$
\item[(SS3)] $g\varnothing = \varnothing$
\item[(SS4)] $g(vw) = (gv)\widetilde\p(g, v)w$
\end{enumerate}
&
\begin{enumerate}\addtolength{\itemsep}{-0.5\baselineskip}
\item[(SS5)] $\widetilde\p(g, \varnothing) = g$
\item[(SS6)] $\widetilde\p(g, vw) = \widetilde\p(\widetilde\p(g, v), w)$
\item[(SS7)] $ \widetilde\p(1, w) = 1$
\item[(SS8)] $\widetilde\p(gh, w) = \widetilde\p(g, hw)\widetilde\p(h, w)$
\end{enumerate}
\end{tabular}

\vspace{-0.3cm}\noindent These properties mean that the pair $(G, \widetilde E^1)$ is a {\em self-similar action} in the sense of Lawson (\cite{La08}, Section 3), which he proves is equivalent to $(G, \widetilde E^1)$ being a self-similar group in the sense of Nekrashevych \cite{Nek05} (except that the action on $E^{1*}$ may not be faithful). For this reason, we call $(G, \widetilde E, \widetilde \p)$ the {\em induced self-similar group} of $(G, E, \p)$. We note that any self-similar graph action $(G, E, \p)$ such that $E$ is finite and has only one vertex will satisfy (SS1)--(SS8), and so from now on we will call any such triple a {\em self-similar group}.

We say that $(G, E, \p)$ is {\em pseudo-free}\footnote{This was originally termed {\em residually free} in a preprint of \cite{EP13}.} if whenever we have $g\in G$ and $e \in E^1$ such that $ge = e$ and $\p(g, e) = 1$ then we have that $g = 1$. This is equivalent to saying that whenever we have $g\in G$ and $w \in E^*$ such that $gw = w$ and $\p(g, w) = 1$ then we have that $g = 1$ (\cite{EP13}, Proposition 5.2). Because this property is phrased in terms of the action and cocycle on the edge set, it is clear that pseudo-freeness of $(G, E, \p)$ is equivalent to pseudo-freeness of $(G, \widetilde E, \widetilde\p)$.

Given $(G, E, \p)$, we let (as in \cite{EP13}),
\[
\s_{G, E} = \{ (\alpha, g, \beta)\in E^*\times G \times E^*\mid d(\alpha) = gd(\beta)\}.
\]
This set becomes an inverse semigroup when given the operation
\[
(\alpha, g, \beta)(\gamma, h, \nu) = \begin{cases}(\alpha g\gamma', \p(g, \gamma')h, \nu), &\text{if }\gamma = \beta\gamma',\\ (\alpha, g\p(h^{-1}, \beta')^{-1}, \nu h^{-1}\beta'), & \text{if } \beta = \gamma\beta',\\ 0 &\text{otherwise}\end{cases}
\]
with
\[
(\alpha, g, \beta)^* = (\beta, g^{-1}, \alpha).
\]
Recall that an inverse semigroup with zero $\s$ is called {\em E*-unitary} if whenever one has $s\in \s$ and $e\in E(\s)$,
then $se\in E(\s)\setminus\{0\}$ implies that $s\in E(\s)$.  In \cite{EP13}, Proposition 5.4, it is shown that $\s_{G, E}$ is E*-unitary if and only if $(G, E, \p)$ is pseudo-free. In the remainder of this section, we show that pseudo-freeness is in fact equivalent to a stronger condition on $\s_{G, E}$.

A {\em prehomomorphism} from an inverse semigroup with zero $\s$ to a group $H$ is a function
\[
\theta: \s\setminus \{0\} \to H
\]
such that whenever we have $s, t\in \s\setminus \{0\}$ such that $st\neq 0$, then $\theta(st) = \theta(s)\theta(t)$. A prehomomorphism $\theta$ defined on $S$ is called {\em idempotent pure} if $\theta^{-1}(1) = E(\s)$. Every inverse semigroup $\s$ admits a prehomomorphism into a group $U(\s)$ called the {\em universal group} of $\s$. The group $U(\s)$ is generated by the set $\s\setminus \{0\}$ subject to the relations $s \cdot t = st$ if $st \neq 0$. \ An inverse semigroup $\s$ is called {\em strongly E*-unitary} if there exists an idempotent pure prehomomorphism from $\s$ to a group $H$. This is equivalent to saying that the natural map $\sigma$ from $\s\setminus\{0\}$ to $U(\s)$ (which is a prehomomorphism) is idempotent pure. It is clear that if $\s$ is strongly E*-unitary then it is E*-unitary, because if $se$ is a nonzero idempotent then $1 = \sigma(se) = \sigma(s)$.

In the special case of a self-similar group $(G, E, \p)$, $\s_{G, E}$ has extra structure. In this case, the set of elements of the form $(\alpha, g, \varnothing) \in \s_{G, E}$ form a subsemigroup of $\s_{G,E}$. This semigroup is isomorphic to what is called the {\em Zappa-Sz\'ep product} of the free semigroup $E^*$ by the group $G$, denoted $E^*\bowtie G$. See \cite{La08}, Section 3 for a discussion of the construction of this semigroup. As a set, $E^*\bowtie G$ is $E^*\times G$ and the semigroup operation is
\[
(\alpha, g)(\beta, h) = (\alpha g\beta, \p(g, \beta)h).
\]
One sees that this agrees with the operation from $\s_{G,E}$ restricted to elements of the form $(\alpha, g, \varnothing)$.

\begin{lem}\label{SEUC}(Lawson-Wallis)
Let $(G, E, \p)$ be a self-similar group. Then the inverse semigroup with zero $\s_{G, E}$ is strongly E*-unitary if and only if $E^*\bowtie G$ is cancellative.
\end{lem}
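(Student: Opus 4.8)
The plan is to exploit the fact that $E^*\bowtie G$ sits inside $\s_{G, E}$ as the set of triples $(\alpha, g, \varnothing)$, and to identify the universal group $U(\s_{G,E})$ with the enveloping group of the monoid $E^*\bowtie G$. First I would record three elementary computations in $\s_{G,E}$: that the nonzero idempotents are exactly the elements $(\alpha, 1, \alpha)$; that every nonzero element factors as $(\alpha, g, \beta) = (\alpha,1,\varnothing)(\varnothing, g, \varnothing)(\varnothing, 1, \beta)$ with all intermediate products nonzero; and that $g\mapsto (\varnothing, g, \varnothing)$ and $\alpha \mapsto (\alpha, 1, \varnothing)$ are, respectively, a homomorphism of $G$ and of the free monoid $E^*$ into $\s_{G,E}$, intertwined by $(\varnothing, g, \varnothing)(\beta, 1, \varnothing) = (g\beta, 1, \varnothing)(\varnothing, \p(g,\beta), \varnothing)$. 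Writing $a_\alpha = \sigma(\alpha,1,\varnothing)$ and $b_g = \sigma(\varnothing, g, \varnothing)$ and applying the natural map $\sigma$ (which satisfies $\sigma(s^*) = \sigma(s)^{-1}$), the factorization gives $\sigma(\alpha, g, \beta) = a_\alpha\, b_g\, a_\beta^{-1}$; the relations above show that the $a_\alpha$ and $b_g$ generate $U(\s_{G,E})$ under exactly the defining relations of $E^*\bowtie G$, so that $U(\s_{G,E})$ is the enveloping group $\Gamma$ of $E^*\bowtie G$, with $\iota := \sigma|_{E^*\bowtie G}$, $\iota(\alpha, g) = \sigma(\alpha, g, \varnothing) = a_\alpha b_g$, the canonical map $E^*\bowtie G \to \Gamma$.

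With this dictionary, the lemma reduces to showing that $\iota$ is injective if and only if $E^*\bowtie G$ is cancellative, together with the observation that $\sigma$ is idempotent pure (i.e. $\s_{G,E}$ is strongly E*-unitary) precisely when $\iota$ is injective. For the latter, note that $\sigma(\alpha,g,\beta) = \iota(\alpha,g)\,\iota(\beta,1)^{-1}$, so $\sigma(\alpha,g,\beta)=1$ iff $\iota(\alpha,g)=\iota(\beta,1)$, and more generally $\iota(\alpha,g)=\iota(\beta,h)$ iff $\sigma(\alpha, gh^{-1}, \beta)=1$ (using $b_g b_h^{-1} = b_{gh^{-1}}$); since $(\alpha,g,\beta)$ is idempotent iff $\alpha=\beta$ and $g=1$, idempotent purity of $\sigma$ is literally the injectivity of $\iota$. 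One implication of the lemma is then immediate: if $\s_{G,E}$ is strongly E*-unitary then $\iota$ is injective, so $E^*\bowtie G$ embeds as a subsemigroup of the group $\Gamma$ and is therefore cancellative.

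The substantive direction is the converse, and I expect it to be the main obstacle: I must show that if $E^*\bowtie G$ is cancellative then $\iota$ is injective, equivalently that a cancellative $E^*\bowtie G$ embeds into a group. This is not automatic, since cancellative monoids need not embed into groups, and it is precisely the content of the Lawson--Wallis correspondence for self-similar (Zappa--Sz\'ep) monoids. I would approach it by first verifying, by direct computation using that $G$ acts bijectively and length-preservingly and that $E^*$ is free, that $E^*\bowtie G$ is always left cancellative; hence cancellativity is equivalent to right cancellativity, which a further short computation identifies with the pseudo-free condition $gw = w,\ \p(g,w)=1 \Rightarrow g=1$ (via $g_0 = h^{-1}h'$ and the cocycle identity (SS8)).

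Under this hypothesis I would construct the embedding by extending the self-similar action of $G$ from the free monoid $E^*$ to the free group $\F$ on $E^1$, together with a compatible extension of the cocycle $\p$, thereby realizing $\Gamma$ as a Zappa--Sz\'ep product $\F\bowtie G$ into which $E^*\bowtie G$ maps injectively through the inclusion $E^*\hookrightarrow \F$. The delicate point, and the crux of the whole argument, is checking that this extended action and cocycle are well defined on inverse generators $e^{-1}$ so that $\F\bowtie G$ is genuinely a group; this is exactly the step where right cancellativity (pseudo-freeness) is needed, as it guarantees that the recursion defining the extension can be consistently inverted.
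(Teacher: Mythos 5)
Your reduction of the statement to a question about the monoid $M=E^*\bowtie G$ is sound and is, in spirit, what the paper's citations accomplish: identifying $U(\s_{G,E})$ with the enveloping group of $M$, observing $\sigma(\alpha,g,\beta)=\iota(\alpha,g)\iota(\beta,1)^{-1}$, and concluding that idempotent purity of $\sigma$ is exactly injectivity of the canonical map $\iota:M\to U(M)$ (this is the content of the cited Lawson 1999 result). The easy implication (strongly E*-unitary $\Rightarrow$ $M$ embeds in a group $\Rightarrow$ $M$ cancellative) is correct, as is your observation that $M$ is always left cancellative and that right cancellativity amounts to pseudo-freeness (that is Lemma \ref{CRF} of the paper). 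Note, though, that the paper does not prove any of this itself: its entire proof is three citations (Lawson: $M$ is a left Rees monoid; Lawson--Wallis Theorem 5.5: a left Rees monoid embeds in a group iff it is cancellative; Lawson 1999 Theorem 8: $M$ embeds in a group iff $\s_{G,E}$ is strongly E*-unitary). You are attempting a self-contained proof, and the burden falls entirely on the hard implication, which you only sketch.

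That sketch does not work. You propose to embed a cancellative $M$ into a group by extending the $G$-action and the cocycle from $E^*$ to the free group $\F_{E^1}$ and forming a group Zappa--Sz\'ep product $\F_{E^1}\bowtie G$ isomorphic to $\Gamma=U(\s_{G,E})$. Two concrete obstructions: (i) in a Zappa--Sz\'ep product of \emph{groups}, both factors embed, so your construction would force the free group $\F_{E^1}$ to embed into $U(\s_{G,E})$; but for the odometer $(\bz,R_n,\p)$ of Example \ref{odo1}, which is pseudo-free, $U(\s_{\bz,R_n})\cong BS(1,n)\cong\bz[1/n]\rtimes\bz$ is metabelian and contains no free subgroup of rank $2$, even though the natural map $\F_{R_n^1}\to U(\s_{\bz,R_n})$ is surjective. (ii) More locally, defining $\p(g,e^{-1})$ consistently requires the map $k\mapsto\p(k,e)$ to be a bijection of $G$ for each edge $e$ (one needs to solve $\p(k,e)=g$ uniquely), and pseudo-freeness does not provide this: for the $2$-odometer, $\p(z,0)=1=\p(1,0)$ with $z\neq 1$. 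So the ``crux'' you identify cannot be resolved the way you claim, and the substantive direction of the lemma (cancellative $\Rightarrow$ group-embeddable, i.e.\ the Lawson--Wallis theorem for left Rees monoids) remains unproved in your write-up. Either cite that theorem as the paper does, or supply a genuinely different embedding argument (the correct group is a quotient of $\F_{E^1}*G$, not a Zappa--Sz\'ep product with $\F_{E^1}$ as a subgroup, and injectivity of $M\to U(M)$ has to be established by other means, e.g.\ a normal-form argument).
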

\begin{proof}
By \cite{La08}, $E^*\bowtie G$ is a left Rees monoid. By \cite{LW13} Theorem 5.5, a left Rees monoid can be embedded into a group if and only if it is cancellative. By \cite{La99} Theorem 8, $E^*\bowtie G$ can be embedded into a group if and only if $\s_{G, E}$ is strongly E*-unitary. The result follows.
\end{proof}

\begin{lem}\label{CRF}
Let $(G, E, \p)$ be a self-similar group. Then the semigroup $E^*\bowtie G$ is cancellative if and only if $(G, E, \p)$ is pseudo-free.
\end{lem}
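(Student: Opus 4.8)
The plan is to split cancellativity into its left and right halves, observe that the left half holds automatically, and thereby reduce the statement to the equivalence of \emph{right} cancellativity with pseudo-freeness.

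First I would check that $E^*\bowtie G$ is always left cancellative, independently of any hypothesis. Suppose $(\alpha,g)(\beta,h) = (\alpha,g)(\beta',h')$. Comparing first coordinates gives $\alpha(g\beta) = \alpha(g\beta')$; since $E^*$ is the free monoid on $E^1$ it is left cancellative, so $g\beta = g\beta'$, and as $g$ acts on $E^*$ as a length-preserving bijection this forces $\beta = \beta'$. The second coordinates then give $\p(g,\beta)h = \p(g,\beta)h'$, whence $h = h'$ in the group $G$. Thus $E^*\bowtie G$ is cancellative if and only if it is right cancellative, and it suffices to prove that right cancellativity is equivalent to pseudo-freeness.

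For the implication pseudo-free $\Rightarrow$ right cancellative, I would start from $(\alpha,g)(\beta,h) = (\alpha',g')(\beta,h)$. The second coordinates give $\p(g,\beta) = \p(g',\beta)$, and the first coordinates give $\alpha(g\beta) = \alpha'(g'\beta)$. Because the action preserves lengths, $|\alpha| = |\alpha'|$, so these are equal-length prefixes of a common word and hence $\alpha = \alpha'$; consequently $g\beta = g'\beta$. Setting $k = (g')^{-1}g$ and using (SS1)--(SS2) gives $k\beta = \beta$, and substituting $g = g'k$ into the cocycle identity (SS8) gives $\p(g,\beta) = \p(g',\beta)\p(k,\beta)$; combined with $\p(g,\beta) = \p(g',\beta)$ this yields $\p(k,\beta) = 1$. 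The path form of pseudo-freeness (\cite{EP13}, Proposition 5.2, recalled above) now forces $k = 1$, i.e.\ $g = g'$, so $(\alpha,g) = (\alpha',g')$.

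For the converse I would argue directly: assuming right cancellativity, take $g\in G$ and $e\in E^1$ with $ge = e$ and $\p(g,e) = 1$. A direct computation gives $(\varnothing,g)(e,1) = (ge,\p(g,e)) = (e,1) = (\varnothing,1)(e,1)$, so right cancellation yields $(\varnothing,g) = (\varnothing,1)$ and hence $g = 1$; thus $(G,E,\p)$ is pseudo-free. The only real bookkeeping lies in the cocycle manipulation of the third paragraph — everything else is forced by freeness of $E^*$ and the length-preserving, bijective nature of the $G$-action, so I expect no genuine obstacle once the identities (SS1)--(SS8) are in hand.
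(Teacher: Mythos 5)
Your proof is correct and follows essentially the same route as the paper's: left cancellativity is automatic, right cancellativity is reduced to pseudo-freeness via the cocycle identity applied to $k=(g')^{-1}g$ (the paper uses $h^{-1}h'$, an immaterial difference), and the converse uses the same computation with $(\varnothing,g)(e,1)$. You additionally spell out the left-cancellativity check that the paper leaves as ``straightforward,'' which is a harmless refinement.
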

\begin{proof}
We first prove the ``only if'' part. Suppose that $E^*\bowtie G$ is cancellative, and suppose that we have $g\in G$ and $e\in E^1$ such that $ge = e$ and $\p (g, e) = 1$. Then we calculate
\[
(\varnothing, g)(e, 1) = (\varnothing ge, \p(g, e)1) = (e, 1)
\]
\[
(\varnothing, e)(e, 1) = (e, 1).
\]
Since we assume cancellation, this implies that $(\varnothing, g) = (\varnothing, 1)$, and so $g = 1$. Hence $(G, E, \p)$ is pseudo-free.

We now prove that ``if'' part. Suppose that $(G, E, \p)$ is pseudo-free. It is straightforward to show that $E^*\bowtie G$ is always left cancellative. Suppose then that we have $v, v', w\in X^*$ and $g, g', h\in G$ such that
\[
(v, h)(w, g) = (v', h')(w, g).
\]
This implies
\[
(v(hw), \p(h, w)g) = (v'(h'w), \p(h', w) g).
\]
Equating the second coordinates gives us that $\p(h, w) = \p(h',w)$. Also, since the action of $G$ preserves length, equating the first coordinates implies that $v = v'$ and $hw = h'w$. By properties of $\p$ we have
\begin{eqnarray*}
\p(h^{-1}h', w) &=& \p(h^{-1}, h'w)\p(h', w)\\
& = & \p(h^{-1}, hw)\p(h,w)\\
& = & (\p(h,w))^{-1}\p(h,w)\\
& = & 1.
\end{eqnarray*}
Also, by the above we have $h^{-1}h'w = w$. Since $(G, E, \p)$ is pseudo-free, this implies that $h = h'$. Thus we have proven that $(v, h) = (v', h')$ and so $E^*\bowtie G$ is cancellative.
\end{proof}
We now prove the main result of this section, which addresses when $\s_{G, E}$ is strongly E*-unitary for an arbitrary self-similar graph action $(G, E, \p)$ using the two previous lemmas and the induced self-similar group $(G, \widetilde E, \widetilde \p)$.
\begin{theo}\label{RFstronglyE*unitary}
Let $(G, E, \p)$ be a self-similar graph action. Then the inverse semigroup $\s_{G, E}$ is strongly E*-unitary if and only if $(G, E, \p)$ is pseudo-free.
\end{theo}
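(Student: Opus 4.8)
The plan is to reduce everything to the induced self-similar group $(G, \widetilde E, \widetilde\p)$, for which Lemmas~\ref{SEUC} and~\ref{CRF} are available. Combining those two lemmas, applied to the self-similar group $(G, \widetilde E, \widetilde\p)$, gives the chain
\[
\s_{G, \widetilde E} \text{ strongly E*-unitary} \iff \widetilde E^*\bowtie G \text{ cancellative} \iff (G, \widetilde E, \widetilde\p) \text{ pseudo-free},
\]
and, as noted just before the theorem, $(G, \widetilde E, \widetilde\p)$ is pseudo-free exactly when $(G, E, \p)$ is. I would then prove the two implications of the theorem separately.

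For the forward direction I would argue cheaply. Since a strongly E*-unitary inverse semigroup is E*-unitary (as observed in the text, via $1 = \sigma(se) = \sigma(s)$), if $\s_{G, E}$ is strongly E*-unitary then it is E*-unitary, and by \cite{EP13}, Proposition~5.4, this forces $(G, E, \p)$ to be pseudo-free. No use of the induced group is needed here.

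For the reverse direction, suppose $(G, E, \p)$ is pseudo-free. Then $(G, \widetilde E, \widetilde\p)$ is pseudo-free, so by the chain above $\s_{G, \widetilde E}$ is strongly E*-unitary; fix an idempotent pure prehomomorphism $\theta\colon \s_{G, \widetilde E}\setminus\{0\}\to H$. The crux is to exhibit $\s_{G, E}$ as a sub-inverse-semigroup of $\s_{G, \widetilde E}$ on which restricting $\theta$ stays idempotent pure. Since $E^*\subseteq \widetilde E^*$ we have $\s_{G, E}\subseteq \s_{G, \widetilde E}$ as sets, and I would check that the product and involution of $\s_{G, \widetilde E}$ restrict to those of $\s_{G, E}$: the multiplication formulas are identical, the prefix relation among $\beta, \gamma \in E^*$ is the same whether read in $E^*$ or in $\widetilde E^*$ (so the same case of the product is triggered and the product is nonzero in one iff in the other), and the nonzero idempotents of both semigroups are precisely the elements $(\alpha, 1, \alpha)$, whence $E(\s_{G, E})\setminus\{0\} = E(\s_{G, \widetilde E})\cap(\s_{G, E}\setminus\{0\})$. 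Granting this, the restriction of $\theta$ to $\s_{G, E}\setminus\{0\}$ is a prehomomorphism whose preimage of $1$ is $E(\s_{G,\widetilde E})\cap(\s_{G,E}\setminus\{0\}) = E(\s_{G,E})\setminus\{0\}$, hence idempotent pure, so $\s_{G, E}$ is strongly E*-unitary.

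I expect the main obstacle to be the verification that $\s_{G, E}$ genuinely sits inside $\s_{G, \widetilde E}$ as an inverse subsemigroup — concretely, that the action of $G$ on honest paths $E^*$ agrees with the restriction to $E^*$ of the induced action on the free monoid $\widetilde E^*$. This should follow by induction on length: the two actions and their cocycles agree on $E^1$ by the very construction of $(G, \widetilde E, \widetilde\p)$, and both are governed by the same self-similar recursion (compare (E7)/(E8) with (SS4)/(SS6)), so they coincide on all of $E^*$; in particular $gw\in E^*$ for $w\in E^*$, which is what makes the product formula close up inside $\s_{G, E}$. Once this compatibility is established, the transfer of strong E*-unitarity by restriction is routine.
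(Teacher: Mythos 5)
Your overall strategy coincides with the paper's: the forward direction via E*-unitarity and \cite{EP13}, Proposition 5.4, and the reverse direction by passing to the induced self-similar group, invoking Lemmas \ref{SEUC} and \ref{CRF}, and pulling the idempotent pure prehomomorphism back to $\s_{G,E}$. However, there is a genuine gap in the transfer step: the claim that $E^*\subseteq \widetilde E^*$, and hence that $\s_{G,E}$ sits inside $\s_{G,\widetilde E}$ as an inverse subsemigroup, is false whenever $E$ has more than one vertex. The length-zero paths of $E^*$ are the vertices $E^0$, whereas the only length-zero word of $\widetilde E^*$ is $\varnothing$; so for instance the idempotent $(x,1,x)$ with $x\in E^0$ is an element of $\s_{G,E}$ that is simply not a triple in $\s_{G,\widetilde E}$. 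In particular your asserted identity $E(\s_{G,E})\setminus\{0\} = E(\s_{G,\widetilde E})\cap(\s_{G,E}\setminus\{0\})$ cannot hold, and the "restrict $\theta$ to a subsemigroup" argument does not get off the ground. Your worry about compatibility of the two actions on positive-length paths is the easy part (it is essentially definitional); the real issue is at length zero.

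The repair, which is what the paper does, is to replace the inclusion by the map $\iota(\alpha,g,\beta)=(Q(\alpha),g,Q(\beta))$, where $Q$ collapses every vertex to $\varnothing$. This $\iota$ is not injective and is not a semigroup homomorphism --- e.g.\ $(\alpha,g,x)(y,h,\nu)=0$ in $\s_{G,E}$ for distinct vertices $x\neq y$, while the images multiply to something nonzero in $\s_{G,\widetilde E}$ --- but it does satisfy $\iota(st)=\iota(s)\iota(t)$ whenever $st\neq 0$, which is all that is needed for $\sigma\circ\iota$ to be a prehomomorphism. The collapsing then creates one more obligation in the idempotent-purity check that your argument misses: a triple $(\alpha,1,\beta)$ with $\alpha\neq\beta$ two \emph{distinct vertices} would be sent to the idempotent $(\varnothing,1,\varnothing)$ without being an idempotent of $\s_{G,E}$. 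This is excluded only by appealing to the defining condition $d(\alpha)=g\,d(\beta)$ of $\s_{G,E}$, which for $g=1$ and vertices forces $\alpha=\beta$. With these two additions your argument becomes the paper's proof; as written it is only valid for graphs with a single vertex, where the theorem already follows directly from Lemmas \ref{SEUC} and \ref{CRF}.
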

\begin{proof}
We prove the ``if'' part first. Suppose that $(G, E, \p)$ is pseudo-free (and therefore so is $(G, \widetilde E, \widetilde\p)$). Hence by Lemmas \ref{SEUC} and \ref{CRF}, $\s_{G, \widetilde E}$ is strongly E*-unitary. Define $Q: E^* \to \widetilde E^*$ by
\begin{equation}\label{Qdef}
Q(\alpha) = \begin{cases}\alpha & \text{if }\alpha\notin E^0\\ \varnothing &\text{if }\alpha\in E^0
\end{cases}
\end{equation}
and define a function $\iota: \s_{G, E} \to \s_{G, \widetilde E}$ by
\[
\iota(\alpha, g, \beta) = (Q(\alpha), g, Q(\beta)).
\]
Then $\iota(st) = \iota(s)\iota(t)$ as long as $st \neq 0$. Then if $\sigma: \s_{G, \widetilde E} \to U(\s_{G, \widetilde E})$ is the standard map from $\s_{G, \widetilde E}$ to its universal group, the map
\[
\sigma\circ\iota: \s_{G, E} \to U(\s_{G, \widetilde E})
\]
is a prehomomorphism. Since $\s_{G, \widetilde E}$ is strongly E*-unitary, $\sigma$ is idempotent pure. Take $(\alpha, g, \beta)\in \s_{G, E}$. If $\sigma\circ\iota(\alpha, g, \beta) = 1$, then this implies that $(Q(\alpha), g, Q(\beta))$ is a nonzero idempotent, that is, $Q(\alpha) = Q(\beta)$ and $g = 1$. Hence either $\alpha = \beta$ or both $\alpha$ and $\beta$ are paths of length zero (ie vertices). It is not possible for $\alpha$ and $\beta$ to be different paths of length zero, because $(\alpha, g, \beta)\in \s_{G, E}$ and $g = 1$ implies that $d(\alpha) = d(\beta)$. Hence $\sigma\circ\iota$ is an idempotent pure prehomomorphism and thus $\s_{G, E}$ is strongly E*-unitary.

Now we prove the ``only if'' part. Suppose that $\s_{G, E}$ is strongly E*-unitary. Then, in particular, it is E*-unitary. By \cite{EP13}, Proposition 5.4, this implies that $(G, E, \p)$ is pseudo-free.
\end{proof}
\begin{ex} \label{odo1}{\bf (The Odometer)} Take a natural number $n\geq 2$ and consider the graph with $n$ edges and one vertex $R_n$. That is, $R_n^1= \{0,1, 2, \dots n-1\}$, and $R_n^1 = \{\varnothing\}$. We write the group $\bz$ of integers multiplicatively, with generator $z$, so that $\bz = \{z^m\mid m\in \bz\}$. For $x\in R_n^1$, let
\[
zx = \begin{cases} x+1 & \text{ if }x \neq n - 1\\ 0 & \text{ if }x = n-1 \end{cases}.
\]
This formula defines a self-similar group $(\bz, R_n, \p)$ with cocycle
\[
\p(z, x) = \begin{cases}e & \text{ if }x \neq n-1 \\ z &\text{ if } x = n-1\end{cases}.
\]
We claim that $(\bz, R_n, \p)$ is pseudo-free. Suppose that $\nu\in R_n^*$, $z^m\in \bz$, $z^m\nu = \nu$ and $\p(z^m, \nu) = 1$. We suppose that $m > 0$. Since $\p(z^m, \nu) = 1$, we must have that $|\nu| > \log_n(m)$. Furthermore, since $z^m \nu = \nu$, we must have that $|\nu|$ is a multiple of $\log_n(m)$, which is impossible. The case of $m < 0$ is similar. Hence, we must have that $m =0$, and so $(\bz, R_n, \p)$  is pseudo-free.

By Theorem \ref{RFstronglyE*unitary}, $\s_{\bz, R_n}$ is strongly E*-unitary, and so the identity map $\sigma: \s_{\bz, R_n}\setminus \{0\}\to U(\s_{\bz, R_n})$ is idempotent pure. We describe the universal group $U(\s_{\bz, R_n})$. One can always assume that the universal group is generated by the image of $\sigma$. Recall that
\[
\s_{\bz, R_n} = \{(\alpha, z^n, \beta)\mid n\in\bz, \alpha, \beta\in R_n^*\}.
\]
Let
\[
a_i:= \sigma((i, z^0, \varnothing)), \hspace{0.2cm}i\in R_n^1, \hspace{1cm} Z:= \sigma((\varnothing, z^1, \varnothing)).
\]
Then, using the relations in $\s_{\bz, R_n}$ we find quickly that
\[
U(\s_{\bz, R_n}) = \left\langle a_0, a_1, \dots a_{n-1}, Z\mid Za_i = a_{i+1}\text{ for } 0\leq i < n-1, Za_{n-1} = a_0Z\right\rangle.
\]
We can use the relations to reduce this to
\[
U(\s_{\bz, R_n}) = \left\langle a_0, Z\mid Z = a_0^{-1}Z^na_0\right\rangle
\]
obtaining that $U(\s_{\bz, R_n})$ is isomorphic to the Baumslag-Solitar group $BS(1, n)$. We note that the relationship between this self-similar action and these groups has been noted in \cite{BRRW14}, Example 3.5. There it is seen directly that the Zappa-Sz\'ep product $R_n^*\bowtie \bz$ embeds as a suitable positive cone in $BS(1,n)$.
\end{ex}

\section{The C*-algebra of $(G, E, \p)$ as a Partial Crossed Product}

To a self-similar graph action $(G, E, \p)$ one associates a C*-algebra, denoted in \cite{EP13} as $\mathcal{O}_{G, E}$.

\begin{defn}
Let $(G, E, \p)$ be a self-similar graph action. Then $\mathcal{O}_{G, E}$ is the universal C*-algebra for the set
\[
\{p_x\mid x\in E^0\}\cup\{s_e\mid e\in E^1\}\cup\{u_g\mid g\in G\}
\]
subject to the following:
\begin{enumerate}\addtolength{\itemsep}{-0.5\baselineskip}
\item[(CK1)] $\{p_x\mid x\in E^0\}$ is a set of mutually orthogonal projections,
\item[(CK2)] $\{s_e\mid e\in E^1\}$ is a set of partial isometries,
\item[(CK3)] $s_e^*s_e = p_{d(e)}$ for each $e\in E^1$,
\item[(CK4)] $p_x = {\displaystyle \sum_{e\in r^{-1}(x)}s_es_e^*}$ for each $x\in E^0$ with $0 < r^{-1}(x) < \infty$,
\item[(EP1)] $g\mapsto u_g$ is a unitary representation of $G$,
\item[(EP2)] $u_gs_e = s_{ge}u_{\p(g,e)}$ for all $g\in G$ and $e\in E^1$,
\item[(EP3)] $u_gp_x = p_{gx}u_g$ for all $g\in G$ and $x\in E^0$.
\end{enumerate}
\end{defn}
The first four relations are the Cuntz-Krieger relations for the graph $E$. Our main theorem describes when this C*-algebra is isomorphic to a partial crossed product, and is a combination of Theorem \ref{RFstronglyE*unitary} with the following previously known results.

\begin{theo}(\cite{EP13}, Theorem 9.5)\label{EPTheo}
Let $(G, E, \p)$ be a self-similar graph action such that $E$ is a finite graph with no sources. Then $\mathcal{O}_{G, E}$ is isomorphic to the C*-algebra of the groupoid $\Gt(\s_{G, E})$.
\end{theo}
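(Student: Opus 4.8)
The plan is to invoke the general theory of tight groupoid C*-algebras from \cite{Ex08}, according to which $C^*(\Gt(\s))$ is the universal C*-algebra for \emph{tight representations} of an inverse semigroup $\s$. Granting this, the theorem reduces to establishing a bijective correspondence between tight representations of $\s_{G,E}$ and families $\{p_x\}\cup\{s_e\}\cup\{u_g\}$ satisfying (CK1)--(CK4) and (EP1)--(EP3), compatible with both universal properties; the two universal objects would then classify the same representations and so must be isomorphic.

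First I would construct, from any family satisfying the defining relations of $\mathcal{O}_{G,E}$, a map $\pi:\s_{G,E}\to \mathcal{O}_{G,E}$ by
\[
\pi(\alpha, g, \beta) = s_\alpha u_g s_\beta^*,
\]
where $s_\alpha := s_{\alpha_1}\cdots s_{\alpha_n}$ is the product of edge partial isometries and $s_x := p_x$ for a vertex $x$. The constraint $d(\alpha) = g\,d(\beta)$ makes this consistent, since $s_\beta^* s_\beta = p_{d(\beta)}$ and $u_g p_{d(\beta)} = p_{g\,d(\beta)}u_g = p_{d(\alpha)}u_g$ by (EP3). I would then check that $\pi$ is a representation of the inverse semigroup, i.e.\ $\pi(st)=\pi(s)\pi(t)$ and $\pi(s^*)=\pi(s)^*$: the two cases $\gamma=\beta\gamma'$ and $\beta=\gamma\beta'$ in the product on $\s_{G,E}$ match exactly the simplifications of $s_\alpha u_g s_\beta^*\,s_\gamma u_h s_\nu^*$ obtained by collapsing $s_\beta^* s_\gamma$ via (CK2)--(CK3) and then pushing $u_g$ past the remaining edges via the path-level form of (EP2). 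Conversely, any representation of $\s_{G,E}$ recovers generators by $p_x=\pi(x,1,x)$, $s_e=\pi(e,1,d(e))$, and $u_g=\sum_{x\in E^0}\pi(gx,g,x)$ (a finite sum, as $E^0$ is finite), and one verifies these satisfy (CK1)--(CK3) and (EP1)--(EP3) automatically.

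The crux is the tightness condition, which is precisely where (CK4) enters. I would identify the idempotent semilattice $E(\s_{G,E})$ with $\{(\alpha,1,\alpha):\alpha\in E^*\}$, ordered so that $(\alpha,1,\alpha)\le(\beta,1,\beta)$ iff $\alpha$ extends $\beta$, mirroring the prefix order on paths and the nesting of cylinder sets; under $\pi$ these idempotents go to the projections $s_\alpha s_\alpha^*$. The essential observation is that for each vertex $x$ the finite family $\{(e,1,e):e\in r^{-1}(x)\}$ is a \emph{tight cover} of $(x,1,x)$, and---because $E$ is finite with no sources---these are, up to the semilattice relations, the only covers imposing genuinely new constraints. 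Consequently, tightness of $\pi$ in the sense of \cite{Ex08} is equivalent to demanding $\sum_{e\in r^{-1}(x)}s_es_e^*=p_x$ for every $x$, i.e.\ exactly relation (CK4). The main obstacle is verifying this equivalence rigorously: one must show that tightness collapses to (CK4) and imposes nothing further, which requires analyzing the (ultra)filters on $E(\s_{G,E})$ and identifying the tight spectrum $\Et$ with the infinite path space $\Sigma_E$ (here the no-sources hypothesis guarantees every finite path extends to an infinite one, so no extraneous boundary points appear).

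With this correspondence in hand the conclusion is formal: a tight representation of $\s_{G,E}$ is the same datum as a family satisfying (CK1)--(CK4) and (EP1)--(EP3). The universal property of $C^*(\Gt(\s_{G,E}))$ and that defining $\mathcal{O}_{G,E}$ then produce mutually inverse $*$-homomorphisms, yielding the isomorphism $\mathcal{O}_{G,E}\cong C^*(\Gt(\s_{G,E}))$.
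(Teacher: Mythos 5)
This theorem is quoted in the paper from \cite{EP13} (Theorem 9.5) and is not proved here, so there is no in-paper argument to compare against; measured against the original proof in \cite{EP13}, your strategy is essentially the same one: realize $C^*(\Gt(\s_{G,E}))$ as the universal C*-algebra for tight representations of $\s_{G,E}$, set up the dictionary $(\alpha,g,\beta)\mapsto s_\alpha u_g s_\beta^*$ between representations of the inverse semigroup and families satisfying (CK1)--(CK4), (EP1)--(EP3), and show that tightness amounts exactly to (CK4) at each vertex (the no-sources hypothesis ensuring every vertex carries a nonempty finite cover $\{(e,1,e):e\in r^{-1}(x)\}$ and that the tight spectrum is the infinite path space). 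Your outline is correct, with the honest caveat you yourself flag: the equivalence ``tight $\Leftrightarrow$ (CK4) and nothing more'' is the technical heart of the matter and occupies several sections of \cite{EP13}; as written it is asserted rather than established, so the proposal is a faithful roadmap of the known proof rather than a self-contained one.
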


\begin{theo}(\cite{SM11}, Theorem 5.3)\label{SMTheo}
Let $\s$ be a countable E*-unitary inverse semigroup with universal group $U(\s)$ and tight spectrum $\Et$. Then there is a natural partial action of $U(\s)$ on $\Et$ such that the groupoids $\Gt(\s)$ and $U(\s)\ltimes \Et$ are isomorphic.
\end{theo}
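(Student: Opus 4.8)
The plan is to encode the group $U(\s)$ inside $\Gt(\s)$ by means of a continuous cocycle and then recognize $\Gt(\s)$ as a transformation groupoid. Recall that $\Gt(\s)$ is the groupoid of germs $\G(\s,\Et,\theta)$ of the canonical action $\theta$ of $\s$ on its tight spectrum $\Et$, so a typical element is a germ $[s,\xi]$ with $\xi\in D_{s^*s}$, and $d([s,\xi])=\xi$, $r([s,\xi])=\theta_s(\xi)$. Writing $\sigma:\s\setminus\{0\}\to U(\s)$ for the natural prehomomorphism, I would define
\[
c:\Gt(\s)\to U(\s),\qquad c([s,\xi])=\sigma(s),
\]
build the partial action of $U(\s)$ on $\Et$ out of $c$, and then verify that the desired isomorphism is $\Phi([s,\xi])=(\sigma(s),\xi)$.

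First I would check that $c$ is a well-defined continuous cocycle. If $[s,\xi]=[t,\xi]$ then $se=te$ for some idempotent $e$ with $\xi\in D_e$; applying $\sigma$ and using that $\sigma$ is \emph{idempotent pure} (the operative hypothesis, valid since the $\s$ of interest is strongly E*-unitary), so that $\sigma(e)=1$, gives $\sigma(s)=\sigma(se)=\sigma(te)=\sigma(t)$. Continuity is immediate because $c$ is constant on each basic slice $\Theta_s:=\{[s,\xi]:\xi\in D_{s^*s}\}$, and these slices form a basis. The identity $[t,\theta_s(\xi)][s,\xi]=[ts,\xi]$ shows $c(\gamma\eta)=c(\gamma)c(\eta)$, so $c$ is a homomorphism into the (discrete) group $U(\s)$. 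Next I would manufacture the partial action by setting $D_g:=r(c^{-1}(g))$ and $\theta_g:=r\circ(d|_{c^{-1}(g)})^{-1}$ on $D_{g^{-1}}=d(c^{-1}(g))$; since $\Gt(\s)$ is \'etale and $c$ is continuous, each $c^{-1}(g)$ is open and $r,d$ are local homeomorphisms, so every $D_g$ is open and every $\theta_g$ is a homeomorphism \emph{once it is shown to be single valued}. The axioms $D_1=\Et$, $\theta_1=\mathrm{id}$ and $\theta_g\circ\theta_h\subset\theta_{gh}$ then follow formally from the groupoid operations together with the cocycle identity. Granting that $\theta$ is a partial action, $\Phi([s,\xi])=(\sigma(s),\xi)$ lands in $U(\s)\ltimes\Et$ because $\xi\in D_{s^*s}\subset D_{\sigma(s)^{-1}}$, it is surjective by the definition of the domains, it intertwines $d$, $r$, inversion and product by the cocycle property of $c$, and it carries the slice $\Theta_s$ homeomorphically onto $\{\sigma(s)\}\times D_{s^*s}$, so $\Phi$ and $\Phi^{-1}$ are continuous.

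The entire argument hinges on a single point, which I expect to be the main obstacle: that $\theta_g$ is single valued, equivalently that $(d,c):\Gt(\s)\to\Et\times U(\s)$ is injective, equivalently that
\[
\sigma(s)=\sigma(t)\ \text{ and }\ \xi\in D_{s^*s}\cap D_{t^*t}\ \Longrightarrow\ [s,\xi]=[t,\xi].
\]
The mechanism I would use is this. Since $\sigma(s)=\sigma(t)$ we have $\sigma(s^*t)=\sigma(st^*)=1$, so \emph{provided} $s^*t\neq0$ and $st^*\neq0$, idempotent purity forces both $s^*t$ and $st^*$ to be idempotent; that is, $s$ and $t$ are compatible. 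Compatibility gives $ts^*=st^*$ and hence
\[
s^*t=(s^*t)^2=s^*(ts^*)t=s^*(st^*)t=(s^*s)(t^*t),
\]
so $s^*t$ is precisely the meet of the idempotents $s^*s$ and $t^*t$. As $\xi$ is multiplicative on $E(\s)$, this yields $\xi(s^*t)=\xi(s^*s)\xi(t^*t)=1$, i.e. $\xi\in D_{s^*t}$; setting $u=s\wedge t$ one checks $u^*u=s^*t$ and $u\le s$, $u\le t$, so $s$ and $t$ agree on the neighborhood $D_{s^*t}$ of $\xi$ and $[s,\xi]=[u,\xi]=[t,\xi]$.

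The genuinely delicate step, and the one I expect to require the full strength of the hypotheses, is excluding the degenerate possibility $s^*t=0$ (or $st^*=0$): two representatives with ``orthogonal'' sources but equal image in $U(\s)$. A simple single-vertex graph example shows that $\xi(s^*s)=\xi(t^*t)=1$ alone does not force $s^*t\neq0$, so the nonvanishing must be extracted from the group relation $\sigma(s)=\sigma(t)$ \emph{together with} the fact that $\xi$ is \emph{tight}. Here I would reduce to the dense set of ultrafilters and use maximality of $F_\xi$ to argue that $s^*t$ must be compatible with every element of $F_\xi$, hence nonzero, and then pass to general tight $\xi$ by continuity of the $\theta_s$ on the open set $D_{s^*s}\cap D_{t^*t}$. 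Once this nonvanishing lemma is in place, the injectivity of $(d,c)$ is automatic, $\theta$ is a bona fide partial action, and $\Phi$ is the required isomorphism $\Gt(\s)\cong U(\s)\ltimes\Et$.
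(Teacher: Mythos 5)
Your overall strategy --- bundle together the germs $[s,\xi]$ with $\sigma(s)=g$ into a single partial homeomorphism of $\Et$, use idempotent purity of $\sigma$ (you are right that strong E*-unitarity is the operative hypothesis here, even though the statement only says E*-unitary) to make this well defined, and then observe that $[s,\xi]\mapsto(\sigma(s),\xi)$ is an isomorphism of topological groupoids --- is exactly the route the paper takes following \cite{SM11}; packaging it as a continuous cocycle $c$ and forming $D_g=r(c^{-1}(g))$ is only a cosmetic difference from the paper's $F_g=\bigcup_{s\in\sigma^{-1}(g)}D_{ss^*}$.

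The one place your argument is incomplete is precisely the step you flag as ``genuinely delicate'': ruling out $st^*=0$ or $s^*t=0$ when $\sigma(s)=\sigma(t)$ and $\xi\in D_{s^*s}\cap D_{t^*t}$. You propose to extract this from tightness by a density-of-ultrafilters argument, but you do not carry it out, and in fact neither tightness nor density is needed; the point is a short algebraic one, and it is how the paper closes the same gap. Since $\xi$ is a filter containing both $s^*s$ and $t^*t$, it is closed under multiplication and omits $0$, so $s^*s\,t^*t\neq 0$; as $st^*=0$ would force $s^*s\,t^*t=s^*(st^*)t=0$, we conclude $st^*\neq 0$. Idempotent purity then makes $st^*$ a nonzero idempotent, hence self-adjoint, so $st^*=ts^*$, and therefore $(s^*t)^2=s^*(ts^*)t=s^*(st^*)t=s^*s\,t^*t\neq 0$, giving $s^*t\neq 0$ as well. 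From here your compatibility computation ($s^*t=(s^*s)(t^*t)\in\xi$, so $s$ and $t$ agree on the neighborhood $D_{s^*t}$ of $\xi$ and $[s,\xi]=[t,\xi]$) goes through verbatim and matches the paper's verification that $\overline{s\xi s^*}=\overline{t\xi t^*}$. In short: right approach, but the step you single out has a two-line fix using only the filter axioms and idempotent purity, whereas the detour through ultrafilters and continuity that you sketch is both unnecessary and, as written, not a proof.
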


We now define what is meant by ``$\Et$'' and ``$\Gt(\s)$'' in the above, and then describe the situation in our case.

Each inverse semigroup $\s$ possesses a natural order structure. Two elements $s$ and $t$ satisfy $s \leqslant t$ if and only if $s = ss^*t$. This is equivalent to saying that $s = te$ for some idempotent $e$. If $e$ and $f$ are idempotents, then $e\leqslant f$ if and only if $ef = e$. Recall that a {\em filter} $F$ in a partially ordered set $X$ is a {\em proper} subset that is {\em downward directed} (that is, for each $x,y\in F$ there is an element $z\in F$ such that $z \leqslant x, y$) and {\em upwards closed} (that is, if $x\in F$ and $x \leqslant y$ then $y\in F$). If $F$ is a proper subset which is downward directed, then it is called a {\em filter base} and 
\[
\overline{F} = \{ x\in X \mid f \leqslant x \text{ for some } f\in F\}
\]
is a filter. Also recall that an {\em ultrafilter} is a filter which is not properly contained in another filter. Filters in $E(\s)$ are closed under multiplication and, if $\s$ has a zero element, then filters in $E(S)$ do not contain the zero element. If $\xi\subset E(\s)$ is a filter and $e\in \xi$, then it is straightforward that both $e\xi$ and $\xi e$ are filter bases and $\xi = \overline{e\xi} = \overline{\xi e}$. 

Suppose that $\s$ is countable and consider $\{0,1\}^{E(\s)}$, the power set of $E(\s)$. This is a compact Hausdorff space homeomorphic to the Cantor set when given the product topology. Let $\widehat{E}_0$ denote the closed subspace of filters in $E(\s)$ -- this is called the {\em spectrum} of $\s$. Let $\Eu$ denote the space of ultrafilters, and let $\Et$ denote the closure of $\Eu$ in $\widehat{E}_0$ -- this is called the {\em tight spectrum} of $\s$.

Any inverse semigroup acts naturally on its spectra. Fix an inverse semigroup $\s$ with set of idempotents $E$. For each $e\in E$, let $D_e = \{ \xi \in \Et\mid e\in \xi\}$. Then define $\theta_s: D_{s^*s}\to D_{ss^*}$ by
\[
\theta_s(\xi) = \overline{s\xi s^*}.
\]
These sets and maps define an action of $\s$ on $\Et$. The groupoid of germs associated to this action is called the {\em tight groupoid} of $\s$ and is denoted $\Gt(\s)$. For details, see \cite{Ex08}.

We now turn to constructing a partial action of $U(\s)$ on the tight spectrum of $\s$ from the canonical action of $\s$. Let $\sigma: \s\setminus \{0\}\to U(\s)$ be the standard identity map from $\s\setminus \{0\}$ to $U(\s)$. The basic idea from \cite{SM11} is that for a given group element $g\in U(\s)$ one ``bundles together'' the partial homeomoprhisms corresponding to all its preimages under $\sigma$. It turns out that to guarantee that such functions agree on their domains, one needs that $\s$ is strongly E*-unitary.

To be more precise, let $(\{D_e\}_{e\in E(\s)}, \{\theta_s\}_{s\in \s})$ be the canonical action of $\s$ on $\Et$. Define a partial action $(\{F_g\}_{g\in U(\s)}, \{\widetilde\theta_g\}_{g\in U(\s)}\})$ of the group $U(\s)$ on $\Et$, by setting
\begin{equation}\label{PADomain}
F_g = \bigcup_{s\in \sigma^{-1}(g)}D_{ss^*}
\end{equation}
such that, for all $\xi$ in some $D_{s^*s}\subset F_{g^{-1}}$, we have $\widetilde\theta_g(\xi) = \theta_s(\xi)$. To see why this is well-defined, suppose that we have $s, t\in \sigma^{-1}(g)$ for some $g\in U(\s)$, and that we have a filter $\xi\in D_{s^*s}\cap D_{t^*t}$ -- this means that $s^*s, t^*t \in \xi$, and so $s^*st^*t \neq 0$. Hence $st^* \neq 0$. A short calculation gives $$\sigma(st^*) = \sigma(s)\sigma(t)^{-1} = gg^{-1} = 1,$$ and so $st^* = ts^*$ is an idempotent.

Since $t^*t \in \xi$ and $s\xi s^*$ is a filter, we must have that $st^*ts^*\neq 0$. This means that $st^*st^* \neq 0$,
and so similar to the above $t^*s \neq 0$ and is an idempotent. Furthermore, $s^*t = s^*ts^*t = s^*st^*t \in \xi$. Now, suppose that $e\in \overline{s\xi s^*}$. Then there exists $f\in \xi$ such that $sfs^* \leqslant e$, which is to say that $esfs^* = sfs^*$. Now we have 
\[
e(ts^*sfs^*st^*) = ts^*(esfs^*)st^* = ts^*sfs^*st^* \in t\xi t^*
\]
and so $e$ is greater than an element of $t\xi t^*$, whence $e\in \overline{t\xi t^*}$. This argument is symmetric in $s$ and $t$, so we must have that $\overline{s\xi s^*}=\overline{t\xi t^*}$, and thus the functions $\theta_s$ and $\theta_t$ agree on $F_{g^{-1}}$ from \eqref{PADomain}, and so $\widetilde\theta_g$ is well-defined. We note that by \eqref{PADomain}, the only $g\in U(\s)$ for which $F_g$ is nonempty will be those in the image of $\sigma$.

It is straightforward to show that map from $\mathcal{G}(\s, \Et, \theta)$ to $U(\s)\ltimes_{\widetilde\theta} \Et$ given by
\[
[s, \xi] \mapsto (\sigma(s), \xi)
\]
is a well-defined isomorphism of topological groupoids.

Now, let $(G, E, \p)$ be a self-similar graph action such that $E$ is a finite graph with no sinks or sources. Then the tight spectrum of $\s_{G, E}$ is homeomorphic to the space of infinite paths, see \cite{EP13}, Section 8. The action of $G$ on $E^*$ extends to an action on $\Sigma_E$ by homeomorphisms determined by the following: for each $g\in G$ and $x\in \Sigma_E$ we have
\[
(gx)_i = \p(g, x_1x_2\cdots x_{i-1})x_i.
\]
As above, each $(\alpha, g, \beta)\in \s_{G, E}$ acts via a partial homeomorphism on its tight spectrum $\Sigma_E$ given by
\[
(\alpha, g, \beta): \beta\Sigma_E \to \alpha\Sigma_E
\]
\[
\beta x\mapsto \alpha (gx).
\]

The above together with Theorem \ref{RFstronglyE*unitary} and previously known Theorems \ref{EPTheo} and \ref{SMTheo} directly imply the following.
\begin{theo}\label{maintheo}
Let $(G, E, \p)$ be a self-similar graph action such that $E$ is a finite graph with no sinks or sources. Suppose further that $(G, E, \p)$ is pseudo-free. Then $\mathcal{O}_{G, E}$ is isomorphic to a partial crossed product $C(\Sigma_E)\rtimes_{\widetilde\theta} U(\s_{G, E})$. The action $\widetilde\theta$ is the action on $C(\Sigma_E)$ derived from the action $\theta$ described above.
\end{theo}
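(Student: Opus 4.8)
The plan is to chain together the three results cited immediately before the statement, checking at each stage that the required hypotheses hold. First I would invoke Theorem \ref{EPTheo}: since $E$ is a finite graph with no sources, $\mathcal{O}_{G,E}$ is isomorphic to $C^*(\Gt(\s_{G,E}))$, the C*-algebra of the tight groupoid of $\s_{G,E}$. This reduces the problem to realizing $\Gt(\s_{G,E})$ as the transformation groupoid of a partial group action. Next I would verify the hypotheses of Theorem \ref{SMTheo}. The inverse semigroup $\s_{G,E}\subset E^*\times G\times E^*$ is countable because $E$ is finite and $G$ is countable, and by Theorem \ref{RFstronglyE*unitary} pseudo-freeness of $(G,E,\p)$ implies that $\s_{G,E}$ is strongly E*-unitary, hence E*-unitary. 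Thus Theorem \ref{SMTheo} applies and produces a natural partial action of $U(\s_{G,E})$ on the tight spectrum $\Et$ together with an isomorphism of topological groupoids $\Gt(\s_{G,E})\cong U(\s_{G,E})\ltimes\Et$. This is precisely the partial action built in the discussion preceding the statement by bundling the germ maps $\theta_s$ for $s\in\sigma^{-1}(g)$.

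I would then identify the unit space concretely. Because $E$ is finite with no sinks or sources, the tight spectrum of $\s_{G,E}$ is homeomorphic to the space $\Sigma_E$ of infinite paths (following \cite{EP13}, Section 8), and $\Sigma_E$ is compact and Hausdorff. Under this homeomorphism the canonical action $\theta$ of $\s_{G,E}$ on $\Et$ becomes the explicit action on $\Sigma_E$ in which $(\alpha,g,\beta)$ sends $\beta x\mapsto\alpha(gx)$, and consequently the SM11 partial action transports to the partial action $\widetilde\theta$ of $U(\s_{G,E})$ on $\Sigma_E$ named in the statement. Transporting the groupoid isomorphism along this homeomorphism yields $\Gt(\s_{G,E})\cong U(\s_{G,E})\ltimes_{\widetilde\theta}\Sigma_E$, and hence an isomorphism of the associated C*-algebras. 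Finally, since $\Sigma_E$ is compact Hausdorff, the general fact recorded in Section 2 identifies the groupoid C*-algebra of $U(\s_{G,E})\ltimes_{\widetilde\theta}\Sigma_E$ with the partial crossed product $C(\Sigma_E)\rtimes_{\widetilde\theta}U(\s_{G,E})$. Assembling the chain gives
\[
\mathcal{O}_{G,E}\;\cong\;C^*(\Gt(\s_{G,E}))\;\cong\;C^*\!\left(U(\s_{G,E})\ltimes_{\widetilde\theta}\Sigma_E\right)\;\cong\;C(\Sigma_E)\rtimes_{\widetilde\theta}U(\s_{G,E}).
\]

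The proof is essentially a matter of concatenating quoted theorems, so there is no deep obstacle; the one point demanding care is bookkeeping. I must confirm that the partial action produced abstractly by Theorem \ref{SMTheo} on $\Et$ corresponds, under the identification $\Et\cong\Sigma_E$, exactly to the explicit action $\widetilde\theta$ appearing in the statement, so that the commutative algebra acted upon in the last step is the intended $C(\Sigma_E)$ with the intended action. Since the SM11 partial action is defined directly through the germ maps $\theta_s$, and these correspond precisely to the partial homeomorphisms $\beta x\mapsto\alpha(gx)$, this compatibility is routine to check; it is the sole verification that does not follow verbatim from the cited statements.
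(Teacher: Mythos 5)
Your proposal is correct and follows exactly the paper's argument, which likewise just concatenates Theorems \ref{EPTheo}, \ref{RFstronglyE*unitary}, and \ref{SMTheo} with the identification $\Et\cong\Sigma_E$ and the standard passage from the transformation groupoid of a partial action to the partial crossed product. Your only addition is to spell out the hypothesis-checking and the compatibility of the two descriptions of the partial action, which the paper leaves implicit.
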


\begin{ex} \label{odo2}{\bf (The 2-Odometer)} We consider the self-similar group $(\bz,R_2, \p)$, that is, Example \ref{odo1} with $n=2$. Recall that
\[
\s_{\bz,R_2} = \{(\alpha, z^n, \beta)\mid n\in\bz, \alpha, \beta\in R_2^*\}.
\]
For convenience and clarity of computations, we will make the identifications $S_\alpha :=(\alpha, z^0, \varnothing)$ and $U :=(\varnothing, 1, \varnothing)$, so that
\[
\s_{\bz,R_2} = \{S_\alpha U^n S_\beta^*\mid n\in\bz, \alpha, \beta\in \{0,1\}^*\}.
\]
The universal group of $\s_{\bz, R_2}$ is the Baumslag-Solitar group $BS(1,2)$. We give a different presentation than in Example \ref{odo1} by setting
\begin{eqnarray*}
\sigma(S_0) &=& a\\
\sigma(S_1) &=& b.
\end{eqnarray*}
Doing this, we obtain
\[
H := U(\s_{\bz, R_2}) = \left\langle a,b \mid aba^{-1}b^{-1} = ba^{-1}\right\rangle
\]
with $\sigma(U) = ba^{-1}$.

Because this action is pseudo-free, $\sigma$ is idempotent-pure and Theorem \ref{SMTheo} applies. Hence the C*-algebra of the tight groupoid of $\s_{\bz,R_2}$ is isomorphic to the partial crossed product of the tight spectrum of $\s_{\bz, R_2}$ by $H$. We will describe this partial action $(\{D_g\}_{g\in H}, \{\theta_g\}_{g\in H})$.

We begin by looking closer at the group $H$.
For a word $\alpha$ in $a$ and $b$, let $\widetilde\alpha$ be the word in $0$ and $1$ obtained from $\alpha$ by replacing $a$ with $0$ and $b$ with $1$; that is, $\alpha = \sigma(S_{\widetilde\alpha})$. Supppse that $\alpha$ and $\beta$ are words in $a$ and $b$ and that $|\alpha| = |\beta|$. Then we claim that the group element $\alpha\beta^{-1}$ is equal to $\sigma(U^k)$ for some $k\in\bz$. One may view $\widetilde\alpha$ and $\widetilde\beta$ as binary numbers (with the powers of 2 increasing from left to right rather than right to left). For a word $\nu$ in $a$ and $b$, let $n_\nu$ denote the integer corresponding to the binary number determined by $\widetilde\nu$. Now, notice that
\begin{eqnarray*}
U^{n_\beta - n_\alpha}S_{\widetilde\alpha} S_{\widetilde\beta}^* &=& U^{n_\beta}U^{- n_\alpha}S_{\widetilde\alpha} S_{\widetilde\beta}^*\\
 &=& U^{n_\beta}S_{0^{|\alpha|}}S_{\widetilde\beta}^*\\
 &=& S_{\widetilde\beta} S_{\widetilde\beta}^* \in E(\s_{\bz, R_2})
\end{eqnarray*}
where, in the above, $0^{|\alpha|}$ denotes the word consisting of $|\alpha|$ 0's. Thus, $\sigma(U^{n_\beta - n_\alpha}S_{\widetilde\alpha} S_{\widetilde\beta}^*) = 1$, and so $\alpha\beta^{-1} = \sigma(U^{n_\alpha - n_\beta})$ as
desired. In fact, from this it is easy to see that $\sigma(U^k)\in H$ can always be written in the form
$\alpha\beta^{-1}$ for words $\alpha, \beta$ in $a$ and $b$ with $|\alpha| = |\beta|$. Hence, the image of $\sigma$ consists only of group elements in $H$ which can be written in the form $\alpha\beta^{-1}$ for
words $\alpha$ and $\beta$ (not necessarily of the same length). Since $D_g$ is only nonempty if $g$ is in the image of $\sigma$, the only group elements for which $D_g$ will be nonempty are those which can be written in the form $\alpha\beta^{-1}$.

Next we further clarify the different forms that group elements can take in $H$.

\vspace{0.2cm}
\noindent {\bf Claim:} If $\alpha\beta^{-1} = \nu\omega^{-1}$ in $H$, then $|\alpha| - |\beta| = |\nu| - |\omega|$. If
$|\alpha|- |\beta| >0$, then the initial segment of $\alpha$ of length $|\alpha|- |\beta|$ is equal to the initial segment of $\nu$. Similarly, if $|\alpha|-|\beta|<0$, the initial segment of $\beta$ of length $|\beta|-|\alpha|$ is equal to the initial segment of $\omega$.

\noindent {\em Proof.} First suppose that $|\alpha| = |\beta|$. Then by the discussion above, $\alpha\beta^{-1} = \sigma(U^n)$ for some $n$. Hence we must have that
\[
\sigma(U^nS_{\widetilde\omega}S_{\widetilde\nu}^*) = \alpha\beta^{-1}\omega\nu^{-1} = 1.
\]
Since $\sigma$ is idempotent pure, $U^nS_{\widetilde\omega}S_{\widetilde\nu}^*$ must be an idempotent, and so $|\omega| = |\nu|$.

Now suppose that $|\alpha| > |\beta|$, Then $\alpha\beta^{-1} = \alpha_0\alpha_1\beta^{-1}$ with $|\alpha_0| = |\beta|$. Again, by the discussion above, $\alpha_1\beta^{-1} = \sigma(U^n)$  for some $n$. Hence
\[
\sigma(S_{\widetilde\alpha_0}U^nS_{\widetilde\omega} S_{\widetilde\nu}^*) = \alpha_0\alpha_1\beta^{-1}\omega\nu^{-1} = 1
\]
and again $S_{\widetilde\alpha_0}U^nS_{\widetilde\omega} S_{\widetilde\nu}^*$ is an idempotent. This can only happen if $\alpha_0$ is an initial segment of $\nu$ and $|\alpha_0| + |\omega| = |\nu|$. Hence $|\alpha| - |\beta| = |\nu| - |\omega|$. The case of $|\beta| > |\alpha|$ is similar. \hspace{\fill}$\square$

\vspace{0.2cm}
As above, the tight spectrum of $\s_{\bz, \{0,1\}}$ is homeomorphic to $\Sigma_{\{0,1\}}$, the space of infinite words in $0$ and $1$. There is a homeomorphism $\lambda: \Sigma_{\{0,1\}} \to \Sigma_{\{0,1\}}$ which takes an infinite sequence $x$, looks for the first entry which is not equal to 1, switches it to 1, switches the previous entries to 0, and leaves the rest of the entries unchanged. If all entries are equal to 1, $\lambda$ switches them all to 0. One sees that this is the extension of the action of $\bz$ in Example \ref{odo1} to infinite sequences. Our maps $\theta_g$ will involve this homeomorphism.

We can now describe the partial action $(\{D_g\}_{g\in H}, \{\theta_g\}_{g\in H})$. If $g$ is not of the form $\alpha\beta^{-1}$ for some words $\alpha, \beta$ in $a$ and $b$, then $D_g = \emptyset$. Otherwise, we have three cases.

\begin{enumerate}
\item If $g = \alpha\beta^{-1}$ with $|\alpha| = |\beta|$, then $D_g = D_{g^{-1}} = \Sigma_{\{0,1\}}$, and
\[
\theta_g(x) = \lambda^{n_\alpha - n_\beta}(x).
\]
\item If $g = \alpha\beta^{-1}$ with $|\alpha| > |\beta|$, then $\alpha\beta^{-1} = \alpha_g\alpha_0\beta^{-1}$ with $|\alpha_0| = |\beta|$. Further, the word $\alpha_g$ does not depend on the particular representation of $\alpha\beta^{-1}$ by the above claim. In this case we have $D_{g^{-1}} = \Sigma_{\{0,1\}}$ and $D_g = \alpha_g\Sigma_{\{0,1\}}$. The map $\theta_g$ is given by
\[
\theta_g(x) = \alpha_g\lambda^{n_{\alpha_0}-n_\beta}(x)
\]
where above we are concatenating the infinite sequence $\lambda^{n_{\alpha_0}-n_\beta}(x)$ with $\alpha_g$.
\item The third case, where $g= \alpha\beta^{-1}$ with $|\alpha| > |\beta|$, is completely determined by the second case above. Here $\alpha\beta^{-1} = \alpha\beta_0^{-1}\beta_g^{-1}$ with $|\alpha| = |\beta_0|$, $\theta_g: \beta_g\Sigma_{\{0,1\}} \to \Sigma_{\{0,1\}}$, and
\[
\theta_g(\beta_gx) = \lambda^{n_{\alpha}-n_{\beta_0}}(x)
\]
\end{enumerate}
The partial action $\theta$ induces a partial action $\widetilde\theta$ on $C(\Sigma_E)$. By Theorem \ref{maintheo} and Example \ref{odo1}, we have that
\[
\mathcal{O}_{\bz, R_2} \cong C(\Sigma_{R_2})\rtimes_{\widetilde\theta} BS(1, 2)
\]
and, more generally
\[
\mathcal{O}_{\bz, R_n} \cong C(\Sigma_{R_n})\rtimes_{\widetilde\theta} BS(1, n)
\]
We finish this example by pointing out that this gives a realization of the C*-algebra $\mathcal{Q}_2$ associated in \cite{LL12} to the $2$-adic integers as a partial crossed product, because  $\mathcal{Q}_2\cong \mathcal{O}_{\bz, R_2}$. See \cite{BRRW14}, Example 6.5 for more details.
\end{ex}

\section{Induced Actions and $\mathcal{O}_n$}

Suppose that $\theta = (A, G, \{D_g\}_{g\in G}, \{\theta_g\}_{g\in G})$ is a partial C*-dynamical system and that $G$ is a subgroup of $H$. Then one can extend this partial C*-dynamical system to $H$, creating $\widehat\theta = (A, H, \{\widehat D_g\}_{h\in H}, \{\widehat\theta_g\}_{h\in H})$ by setting $\widehat D_h = D_h$ and $\widehat \theta_h = \theta_h$ if $h\in G$ and $\widehat D_h$ and $\widehat \theta_h$ to be the zero ideal and zero map if $h\notin G$. In this situation, $A\rtimes_\theta G \cong A\rtimes_{\widehat\theta} H$. In this way we see that a result of the form ``$B$ is isomorphic to a partial crossed product by $H$'' doesn't contain as much information as one would like, because perhaps a subgroup of $H$ would suffice. In our cases so far, we have shown that the $\mathcal{O}_{G, E}$ are isomorphic to crossed products by certain groups, and that these groups are actually generated by the elements whose corresponding ideals are nonempty.

It is well-known that the Cuntz algebra $\mathcal{O}_n$ can be realized as a partial crossed product. The first such construction appears in \cite{QR97}, where it is shown that $\mathcal{O}_n$ is isomorphic to a partial crossed product of the Cantor set by $\F_n$, the free group on $n$ elements. We reproduce this construction below in Example \ref{cuntzex}. In \cite{Ho07}, $\mathcal{O}_n$ is realized as a partial crossed product of the Cantor set by the Baumslag-Solitar group $BS(1, n) \cong \bz\left[\frac1n\right]\rtimes \bz$, an amenable group. In both of these situations, the elements $g$ for which the ideal $D_g$ is nonzero generate the group. In this section, we show that if $(G, E, \p)$ is pseudo-free self-similar graph action such that $E$ has only one vertex and which satisfies a condition we call {\em exhausting} (Definition \ref{exhausting}), then $\mathcal{O}_{|E^1|}$ is isomorphic to a partial crossed product by the group $U(\s_{G, E})$ and the group elements $g$ such that the ideal corresponding to $g$ is nonzero generate $U(\s_{G, E})$.

\begin{ex} {\bf (The Cuntz Algebra)} \label{cuntzex} This is a construction seen in \cite{QR97}. Let $A$ be a finite alphabet and consider the space $\Sigma_A$ of right-infinite words in elements of $A$. Given the product topology, $\Sigma_A$ is homeomorphic to the Cantor set. Let $\F_A$ denote the free group generated by $A$. We will describe a partial action of $\F_A$ on $\Sigma_A$. If $g\in \F_A$ is not of the form $\alpha\beta^{-1}$ for words $\alpha, \beta\in A^*$, then $D_g = \emptyset$. Otherwise, for all $\alpha, \beta\in A^*$ we have
\[
D_{\alpha\beta^{-1}} = \alpha\Sigma_A = \{\alpha x\mid x\in \Sigma_A\}
\]
and the map $\theta_{\alpha\beta^{-1}}:\beta\Sigma_A \to \alpha\Sigma_A$ is defined by
\[
\theta_{\alpha\beta^{-1}}(\beta x) = \alpha x.
\]
Here, $\Sigma_A$ is compact and each $D_g$ is clopen. One can show that the C*-algebra of the groupoid $\F_A \ltimes_\theta \Sigma_A$ is isomorphic to $\mathcal{O}_{|A|}$.

\end{ex}

\begin{defn}Suppose that we have a partial action of a group $G$ on a set $X$, say $(\{D_g\}_{g\in G}, \{\theta_g\}_{g\in G})$ and suppose that $\varphi: G \to H$ is an onto homomorphism of groups. Suppose further that whenever $h\in H$,  $g_1, g_2\in \varphi^{-1}(h)$, and $x\in D_{g_1^{-1}}\cap D_{g_2^{-1}}$, then $\theta_{g_1}(x) = \theta_{g_2}(x)$. Then the {\em induced partial action} of $H$ on $X$ is the pair
\[
(\{E_h\}_{h\in H}, \{\theta_h\}_{h\in H})
\]
where
\[
E_h = \bigcup_{g\in \varphi^{-1}(h)} D_g
\]
and, with slight abuse of notation, the $\theta_h$ are as before. 
\end{defn}
This is well-defined because these functions agree on any possible intersections of the sets above. We will need the following condition.


\begin{defn}\label{exhausting}
A self-similar graph action $(G, E, \p)$ is called {\em exhausting} if for all $g\in G$ there exists $\alpha\in E^*$ such that $\p(g, \alpha) = 1_G$.
\end{defn}
There is a natural homomorphism $\phi:\F_{E^1} \to U(\s_{G, E})$ determined by $\phi(e) = \sigma(S_e)$ for all $e\in E^1$. In the case that $(G, E, \p)$ is exhausting, we have the following.
\begin{lem}\label{exhaustingsurjective}
Let $(G, E, \p)$ be a self-similar graph action such that $E$ is a finite graph with no sinks or sources. If $(G, E, \p)$ is exhausting, then the natural group homomorphism $\phi:\F_{E^1}\to U(\s_{G, E})$ is surjective.
\end{lem}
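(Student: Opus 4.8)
The plan is to exploit the fact, recalled in the text, that $U(\s_{G,E})$ is generated by the image of the canonical prehomomorphism $\sigma\colon\s_{G,E}\setminus\{0\}\to U(\s_{G,E})$; hence it suffices to show that every generator $\sigma(\alpha,g,\beta)$ lies in the image of $\phi$. Before doing so I would record two elementary facts. First, $\sigma(s^*)=\sigma(s)^{-1}$ for every nonzero $s$: since $ss^*s=s$ and all the relevant products are nonzero, $\sigma(s)\sigma(s^*)\sigma(s)=\sigma(ss^*s)=\sigma(s)$, and cancelling in the group gives the claim. Second, writing $S_\alpha:=(\alpha,1,\varnothing)$, one checks $S_{e_1\cdots e_k}=S_{e_1}\cdots S_{e_k}$ directly from the multiplication rule, so that $\sigma(S_\alpha)=\phi(e_1)\cdots\phi(e_k)=\phi(\alpha)$ for any path $\alpha=e_1\cdots e_k$ (and, dually, $\sigma(S_\alpha^{*})=\phi(\alpha)^{-1}$).

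Next I would factor an arbitrary generator. Setting $U_g:=(\varnothing,g,\varnothing)$, a short computation with the multiplication rule of $\s_{G,E}$ shows
\[
(\alpha,g,\beta)=S_\alpha\,U_g\,S_\beta^{*},
\]
where each intermediate product is nonzero and lands in the first branch of the rule (here one uses the defining constraint $d(\alpha)=g\,d(\beta)$ together with $\p(1,\cdot)=1$ and $\p(g,\varnothing)=g$). Applying $\sigma$ and the two facts above then gives
\[
\sigma(\alpha,g,\beta)=\phi(\alpha)\,\sigma(U_g)\,\phi(\beta)^{-1},
\]
so the outer factors already lie in the image of $\phi$, and the whole statement reduces to proving that $\sigma(U_g)$ lies in the image of $\phi$ for every $g\in G$.

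This reduction is precisely where the \emph{exhausting} hypothesis is needed, and it is the heart of the argument. Given $g\in G$, exhaustingness provides a path $\nu\in E^*$ with $\p(g,\nu)=1$. I would then compute
\[
U_g\,S_\nu=(\varnothing,g,\varnothing)(\nu,1,\varnothing)=(g\nu,\p(g,\nu),\varnothing)=S_{g\nu},
\]
a nonzero product. Applying $\sigma$ gives $\sigma(U_g)\sigma(S_\nu)=\sigma(S_{g\nu})$, whence $\sigma(U_g)=\phi(g\nu)\,\phi(\nu)^{-1}$ lies in the image of $\phi$. Combined with the factorization above, this shows that every generator of $U(\s_{G,E})$ is in the image of $\phi$, so $\phi$ is surjective.

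The main obstacle is isolating and handling the middle factor $\sigma(U_g)$: the path and co-path pieces are produced by the definition of $\phi$ essentially for free, and the full force of the exhausting condition is spent converting the pure group element $U_g$ into a ratio of two path words via the single cocycle identity $\p(g,\nu)=1$. The only delicate bookkeeping I anticipate is verifying that the products $S_\alpha U_g$, $(S_\alpha U_g)S_\beta^{*}$, and $U_g S_\nu$ are all nonzero and fall in the first case of the multiplication rule; in the one-vertex setting relevant to this section there is a single vertex $\varnothing$, so every exhausting word $\nu$ is automatically composable with $U_g$ and these checks are routine.
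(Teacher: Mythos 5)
Your proof is correct and follows essentially the same route as the paper: both arguments reduce surjectivity to showing $\sigma(U_g)$ lies in the image of $\phi$, and both use the exhausting hypothesis to produce $\nu$ with $\p(g,\nu)=1$ so that $U_g$ is absorbed into path generators (your computation $U_gS_\nu=S_{g\nu}$ is the same as the paper's $U_gS_\nu S_{g\nu}^*=S_{g\nu}S_{g\nu}^*$). Your write-up merely spells out in more detail the reduction of the generators $\sigma(\alpha,g,\beta)$ to the factors $\phi(\alpha)$, $\sigma(U_g)$, $\phi(\beta)^{-1}$, which the paper leaves implicit.
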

\begin{proof}
Let $g\in G$ and find $\alpha\in E^*$ such that $\p(g,\alpha) = 1_G$. Let $\sigma: \s_{G, E}\setminus \{0\}\to
U(\s_{G, E})$ be the natural map.  Then $U_gS_\alpha S^*_{g\alpha} = S_{g\alpha}S^*_{g\alpha}$, and so $\sigma(U_gS_\alpha S^*_{g\alpha}) = 1$. Since $\sigma$ is multiplicative on nonzero products,  we have that $\sigma(U_g) = \sigma(S_{g\alpha}S^*_g)$. We know that $U(\s_{G, E})$ is generated by the image of $\s_{G, E}$, so this implies that $U(\s_{G, E})$ is generated by $\{\sigma(S_x)\}_{x\in E^1}$, and the result follows.
\end{proof}
In the following, we refer to the action in Example \ref{cuntzex} as the {\em Quigg-Raeburn action}.
\begin{theo}\label{inducedcuntz}
Suppose that $(G, E, \p)$ is a self-similar group. Suppose also that $(G, E, \p)$ is pseudo-free and exhausting. Then the partial crossed product associated to the action $\theta$ of $U(\s_{G, E})$ on $\Sigma_E$ induced by the Quigg-Raeburn action is isomorphic to $\mathcal{O}_{|E^1|}$, and the elements $g\in U(\s_{G, E})$ with corresponding ideals not equal to 0 generate $U(\s_{G, E})$.
\end{theo}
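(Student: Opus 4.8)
The plan is to present both algebras as groupoid $C^*$-algebras and to build an isomorphism between the underlying groupoids. By Example \ref{cuntzex} the Quigg--Raeburn partial action of $\F_{E^1}$ on $\Sigma_E$ (here $\Sigma_E=\Sigma_{E^1}$ since $E$ has one vertex) has transformation groupoid $\F_{E^1}\ltimes\Sigma_E$ with $C^*(\F_{E^1}\ltimes\Sigma_E)\cong\mathcal{O}_{|E^1|}$. Since $(G,E,\p)$ is exhausting, Lemma \ref{exhaustingsurjective} gives that $\phi:\F_{E^1}\to U(\s_{G,E})$ is onto, so the induced partial action $\theta$ of $H:=U(\s_{G,E})$ is defined and its crossed product is $C^*(H\ltimes\Sigma_E)$. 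I would then show that $\Phi\colon(g,x)\mapsto(\phi(g),x)$ is an isomorphism of \'etale groupoids from $\F_{E^1}\ltimes\Sigma_E$ onto $H\ltimes\Sigma_E$; the theorem then follows, since isomorphic \'etale groupoids have isomorphic $C^*$-algebras and the partial crossed product is the $C^*$-algebra of $H\ltimes\Sigma_E$.

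The single nontrivial point --- and the main obstacle --- is the injectivity of $\Phi$. Only group elements of the form $\alpha\beta^{-1}$ with $\alpha,\beta\in (E^1)^*$ have nonempty Quigg--Raeburn domain, and $D_{(\alpha\beta^{-1})^{-1}}=\beta\Sigma_E$, so injectivity reduces to the claim: if $g_1=\alpha_1\beta_1^{-1}$ and $g_2=\alpha_2\beta_2^{-1}$ satisfy $\phi(g_1)=\phi(g_2)$ and $\beta_1\Sigma_E\cap\beta_2\Sigma_E\neq\emptyset$, then $g_1=g_2$ in $\F_{E^1}$. The intersection condition forces $\beta_1$ and $\beta_2$ to be prefix-comparable, say $\beta_2=\beta_1\gamma$. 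Writing $\phi(g_i)=\sigma(S_{\alpha_i})\sigma(S_{\beta_i})^{-1}$ and using $S_{\mu\nu}=S_\mu S_\nu$ together with multiplicativity of $\sigma$ on nonzero products, the equation $\phi(g_1)=\phi(g_2)$ reduces to $\sigma(S_{\alpha_1\gamma})=\sigma(S_{\alpha_2})$, hence $\sigma\!\big(S_{\alpha_1\gamma}S_{\alpha_2}^*\big)=1$. This is precisely where pseudo-freeness is used: by Theorem \ref{RFstronglyE*unitary} the natural map $\sigma$ is idempotent pure, so $S_{\alpha_1\gamma}S_{\alpha_2}^*=(\alpha_1\gamma,1,\alpha_2)$ must be a nonzero idempotent, which forces $\alpha_1\gamma=\alpha_2$. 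Free-group cancellation then gives $g_2=\alpha_1\gamma(\beta_1\gamma)^{-1}=\alpha_1\beta_1^{-1}=g_1$.

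This claim also supplies what is needed to make $\Phi$ a topological isomorphism. It shows that, for each fixed $h\in H$, the nonempty domains $D_g$ with $g\in\phi^{-1}(h)$ are pairwise disjoint; in particular the well-definedness hypothesis in the definition of the induced partial action holds automatically and $E_h=\bigsqcup_{g\in\phi^{-1}(h)}D_g$. Granting injectivity, $\Phi$ is bijective (surjectivity being immediate from surjectivity of $\phi$ and the matching of domains), it is a groupoid homomorphism because $\phi$ is a homomorphism and $\Phi$ restricts to the identity on the common unit space $\Sigma_E$, and it is a homeomorphism: $\Phi$ sends the basic slice $\{g\}\times V$ to $\{\phi(g)\}\times V$, while the disjointness just noted gives $\Phi^{-1}(\{h\}\times W)=\bigsqcup_{g\in\phi^{-1}(h)}\{g\}\times(W\cap D_{g^{-1}})$, an open set. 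Therefore $C(\Sigma_E)\rtimes_{\theta}H\cong C^*(H\ltimes\Sigma_E)\cong C^*(\F_{E^1}\ltimes\Sigma_E)\cong\mathcal{O}_{|E^1|}$.

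For the generation statement, note that $h\in H$ has nonzero associated ideal exactly when $E_h\neq\emptyset$, i.e.\ when $h=\sigma(S_\alpha)\sigma(S_\beta)^{-1}$ for some words $\alpha,\beta$; in particular each generator $\phi(e)=\sigma(S_e)$ with $e\in E^1$ has nonzero ideal, since its domain contains $e\Sigma_E$. As $\phi$ is onto and its image is generated by $\{\sigma(S_e)\}_{e\in E^1}$ (Lemma \ref{exhaustingsurjective}), these elements already generate $H$, so the elements with nonzero ideals generate $U(\s_{G,E})$.
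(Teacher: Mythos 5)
Your proposal is correct and follows essentially the same route as the paper's proof: the central step in both is the computation showing that if two elements of $\phi^{-1}(h)$ have intersecting domains then $\sigma(S_{\alpha_1\gamma}S_{\alpha_2}^*)=1$, so idempotent purity of $\sigma$ (i.e.\ pseudo-freeness via Theorem \ref{RFstronglyE*unitary}) forces them to coincide in $\F_{E^1}$, after which $\Phi(g,x)=(\phi(g),x)$ is an isomorphism of \'etale groupoids. You in fact supply details the paper omits (injectivity and openness of $\Phi$, and the generation statement via Lemma \ref{exhaustingsurjective}), but the underlying argument is the same.
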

\begin{proof}
We first show that the induced action is well-defined. Let $\phi: \F_{E^1} \to U(\s_{G, E})$ denote the surjective group homomorphism from Lemma \ref{exhaustingsurjective}, given on generators by $\phi(x) = \sigma(S_x)$. If we take the $D_{\alpha\beta^{-1}}$ as in Example \ref{cuntzex}, then for $g\in U(\s_{G, E})$ we have
\[
E_h = \bigcup_{\alpha\beta^{-1}\in \phi^{-1}(h)} D_{\alpha\beta^{-1}} = \bigcup_{\alpha\beta^{-1}\in \varphi^{-1}(h)} \alpha\Sigma_E.
\]
In general, given two cylinder sets $\alpha\Sigma_E$ and $\eta\Sigma_E$, either they are disjoint, or one is contained in the other. In the latter case, $\alpha = \eta\alpha'$ (without loss of generality). So, suppose that $h\in U(\s_{G, E})$ and $\alpha\beta^{-1}, \eta\gamma^{-1}\in \phi^{-1}(h)$ such that $\beta\Sigma_E\cap \gamma\Sigma_E \neq \emptyset$. Without loss of generality, we will assume that $\beta\Sigma_E\cap \gamma\Sigma_E = \beta\Sigma_E$, and so $\beta = \gamma\beta'$. This implies that
\begin{eqnarray*}
1 &=& \sigma(S_\alpha S_\beta^*S_\gamma S_\eta^*)\\
  &=& \sigma(S_\alpha S_{\beta'}^*S_\eta^*)\\
&=& \sigma(S_\alpha S_{\eta\beta'}^*)
\end{eqnarray*}
and since $\sigma$ is idempotent pure, we must have that $\alpha = \eta\beta'$. Hence
\[
\alpha\beta^{-1} = \eta\beta' (\gamma\beta')^{-1} = \eta\gamma^{-1},
\]
and so $\theta_{\alpha\beta^{-1}}$ and $\theta_{\eta\gamma^{-1}}$ agree on the intersection of their domains (because if their domains intersect, they must in fact be equal group elements).

Now we have that the induced action is well-defined. The map
\[
\Phi: \F_{E^1} \ltimes_\theta \Sigma_E \to U(\s_{G, E}) \ltimes_\theta \Sigma_E 
\]
\[
\Phi(\alpha\beta^{-1}, \beta x) = (\phi(\alpha\beta^{-1}), \beta x)
\]
is easily shown to be an isomorphism of topological groupoids. We omit the details.

\end{proof}

\begin{ex} If $(\bz, R_n, \p)$ is the odometer from Examples \ref{odo1} and \ref{odo2}, by Theorem \ref{inducedcuntz} we obtain that that $\mathcal{O}_n \cong C(\Sigma_{R_n})\rtimes BS(1, n)$, reproducing the result from \cite{Ho07}.

We also note that while the groups and algebra in this example are the same as Example \ref{odo2}, we obtain nonisomorphic crossed products because the domains of group elements can be different. For example, in the action in Example \ref{odo2} we have that $D_{ba^{-1}}=\Sigma_{R_2}$, while in the induced action described above, 
\[
D_{ba^{-1}} = \bigcup_{n\geq 0} b^na\Sigma_{R_2} = \Sigma_{R_2}\setminus \{bbb\cdots\}.
\]
\end{ex}

\bibliography{C:/Users/Charles/Dropbox/Research/bibtex}{}
\bibliographystyle{alpha}

{\small 
\textsc{Departamento de Matem\'atica, Campus Universit\'ario Trindade
CEP 88.040-900 Florian\'opolis SC, Brasil.}

Charles Starling (corresponding author): \texttt{slearch@gmail.com}}
\end{document}